\newtheorem{theorem}{Theorem}[section]
\newtheorem{corollary}[theorem]{Corollary}
\newtheorem{lemma}[theorem]{Lemma}
\newtheorem{prop}[theorem]{Proposition}
\theoremstyle{definition}
\newtheorem{definition}[theorem]{Definition}
\newtheorem{example}[theorem]{Example}
\newtheorem{remark}[theorem]{Remark}
\newtheorem*{ack}{Acknowledgments}
\newcommand{\Z}{\mathbb{Z}}
\newcommand{\Q}{\mathbb{Q}}
\newcommand{\C}{\mathbb{C}}
\newcommand{\PP}{\mathbb{P}}
\newcommand{\GG}{\mathbb{G}}
\newcommand{\K}{\mathbb{K}}
\newcommand{\RR}{{\mathcal R}}
\newcommand{\Wey}{\mathcal{W}}
\newcommand{\B}{{\mathfrak{B}}}
\newcommand{\g}{{\mathfrak{g}}}
\newcommand{\h}{{\mathfrak{h}}}
\newcommand{\gl}{{\mathfrak{gl}}}
\renewcommand{\sl}{{\mathfrak{sl}}}
\newcommand{\m}{{\mathfrak{m}}}
\renewcommand{\sp}{{\mathfrak{sp}}}
\newcommand{\BB}{\widetilde{B}}
\newcommand{\V}{\mathsf{V}}
\newcommand{\W}{\mathsf{W}}
\newcommand{\G}{{\Gamma}}
\newcommand{\T}{{\mathcal{T}}}
\DeclareMathOperator{\gr}{gr}
\DeclareMathOperator{\im}{im}
\DeclareMathOperator{\codim}{codim}
\DeclareMathOperator{\id}{id}
\DeclareMathOperator{\ab}{{ab}}
\DeclareMathOperator{\abf}{{abf}}
\DeclareMathOperator{\Sym}{Sym}
\DeclareMathOperator{\GL}{GL}
\DeclareMathOperator{\SL}{SL}
\DeclareMathOperator{\Sp}{Sp}
\DeclareMathOperator{\Hom}{{Hom}}
\DeclareMathOperator{\Tors}{{Tors}}
\DeclareMathOperator{\Out}{Out}
\DeclareMathOperator{\Inn}{Inn}
\DeclareMathOperator{\Aut}{Aut}
\DeclareMathOperator{\OA}{OA}
\DeclareMathOperator{\df}{def}
\DeclareMathOperator{\height}{ht}
\DeclareMathOperator{\Lie}{Lie}
\DeclareMathOperator{\supp}{supp}
\DeclareMathOperator{\Grass}{Gr}
\DeclareMathOperator{\Tor}{Tor}
\DeclareMathOperator{\VV}{\WW^{+}}
\DeclareMathOperator{\WW}{{Weight}}
\newcommand{\same}{\Longleftrightarrow}
\newcommand{\surj}{\twoheadrightarrow}
\newcommand{\inj}{\hookrightarrow}
\newcommand{\isom}{\xrightarrow{\,\simeq\,}}
\newcommand{\abs}[1]{\left| #1 \right|}
\begin{document}
\date{March 23, 2013}

\title[Vanishing resonance and representations of Lie algebras]{%
Vanishing resonance and representations of \\ Lie algebras}

\author[Stefan Papadima]{Stefan Papadima$^1$}
\address{Simion Stoilow Institute of Mathematics, 
P.O. Box 1-764,
RO-014700 Bucharest, Romania}
\email{Stefan.Papadima@imar.ro}
\thanks{$^1$Partially supported by 
PN-II-ID-PCE-2011-3-0288, grant 132/05.10.2011}

\author[Alexander~I.~Suciu]{Alexander~I.~Suciu$^2$}
\address{Department of Mathematics,
Northeastern University,
Boston, MA 02115, USA}
\email{a.suciu@neu.edu}
\thanks{$^2$Partially supported by 
NSF grant DMS--1010298}

\subjclass[2010]{Primary
17B10,  
20J05.  
Secondary
20E36,  
57M07. 
}

\keywords{Koszul module, resonance variety, root system, weights, 
Alexander invariant, Torelli group}

\begin{abstract}
We explore a relationship between the classical representation 
theory of a complex, semisimple Lie algebra $\g$ and the 
resonance varieties $\RR(V,K)\subset V^*$ attached to irreducible 
$\g$-modules $V$ and submodules $K\subset V\wedge V$.
In the process, we give a precise roots-and-weights criterion insuring 
the vanishing of these varieties, or, equivalently, the finiteness 
of certain modules $\Wey(V,K)$ over the symmetric algebra 
on $V$.  In the case when $\g=\sl_2(\C)$, our approach 
sheds new light on the modules studied by Weyman 
and Eisenbud in the context of Green's conjecture 
on free resolutions of canonical curves. In the 
case when $\g=\sl_n(\C)$ or $\sp_{2g}(\C)$, 
our approach yields a unified proof of two 
vanishing results for the resonance varieties 
of the (outer) Torelli groups of surface groups,  
results which arose in recent work  by Dimca, 
Hain, and the authors on homological finiteness 
in the Johnson filtration of mapping class groups 
and automorphism groups of free groups. 
\end{abstract}

\maketitle
\setcounter{tocdepth}{1}
\tableofcontents

\section{Introduction}
\label{sect:intro}

\subsection{Koszul modules and resonance varieties}
\label{subsec:kr}

We start with a simple, yet very general construction.  
Let $V$ be a non-trivial, finite-dimensional complex 
vector space, and let $K\subset V\wedge V$ be a 
subspace. To these data, we associate two objects: 
\begin{itemize}
\item
The {\em Koszul module}, $\Wey(V,K)$, is a graded module over 
the symmetric algebra $S=\Sym(V)$, given by an explicit 
presentation involving the third Koszul differential and the inclusion 
map of $K$ into $V\wedge V$.  
\item
The {\em resonance variety}, $\RR(V,K)$, is a homogeneous 
subvariety inside the dual vector space $V^*$, consisting of 
all elements $a\in V^*$ for which there is an element $b\in V^*$, 
not proportional to $a$, such that $a\wedge b$ belongs to the 
orthogonal complement $K^{\perp}\subseteq V^*\wedge V^*$.
\end{itemize}  
These two objects are closely related: at least away from the 
origin, the resonance variety is the support of the Koszul module.  

We investigate here conditions insuring that the resonance 
variety vanishes, i.e., that the Koszul module is finite-dimensional 
over $\C$.  It is easily seen that $\RR(V,K)=\{0\}$ if and only if 
the plane $\PP(K^{\perp})$ misses the  image under the 
Pl\"ucker embedding in $\PP(V^*\wedge V^*)$  
of the Grassmannian of $2$-planes in $V^*$. 
Thinking of $K$ as a point in the Grassmannian 
of $m$-planes in $V\wedge V$, where  $m=\dim K$, we prove in 
Propositions \ref{prop:unm}, \ref{prop:genvanish}, 
and \ref{prop:gen} the following result. 
 
\begin{theorem}
\label{thm:intro1}
For any integer $m$ with $0\le m\le \binom{n}{2}$, where $n=\dim V$,  
the set 
\begin{equation*}
\label{eq:uset}
U_{n,m}=\big\{ K \in \Grass_m (V\wedge V)
\mid \RR(V,K)=\{0\}  \big\}
\end{equation*}
is Zariski open. Moreover, this set is non-empty if and 
only if $m\ge 2n-3$, in which case there is an integer 
$q=q(n,m)$ such that $\Wey_q(V,K)=0$, 
for every $K\in U_{n,m}$.
\end{theorem}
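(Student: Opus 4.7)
The plan is to establish the three assertions in order: openness of $U_{n,m}$, the threshold $m\ge 2n-3$, and a uniform vanishing degree $q(n,m)$.

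\emph{Step 1 (Openness).} Let $X = \mathrm{Pl}(\Grass_2(V^*)) \subset \PP(V^*\wedge V^*)$ denote the Pl\"ucker image. By the criterion recalled just above the theorem, $K\notin U_{n,m}$ if and only if $\PP(K^\perp) \cap X \ne \emptyset$. I would introduce the incidence variety
\[
I = \bigl\{(K,[\omega])\in \Grass_m(V\wedge V)\times X : \omega \in K^{\perp}\bigr\},
\]
which is Zariski closed because the pairing of $K$ with $\omega$ is bilinear. As $X$ is projective, the projection $\pi_1\colon I\to \Grass_m(V\wedge V)$ is proper, and its image equals the complement of $U_{n,m}$; hence $U_{n,m}$ is Zariski open.

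\emph{Step 2 (The threshold).} When $m\le 2n-4$, the projective dimension theorem applied in $\PP(V^*\wedge V^*)$ forces $X\cap \PP(K^\perp) \ne \emptyset$: indeed, $\dim X + \dim \PP(K^\perp) = (2n-4) + (\binom{n}{2}-m-1) \ge \binom{n}{2}-1$, equaling or exceeding the ambient dimension, so $U_{n,m}=\emptyset$. When $m\ge 2n-3$, I would compute $\dim I$ via the second projection $\pi_2\colon I\to X$, whose fibre over $[\omega]$ is $\Grass_m(\omega^\perp)$ of dimension $m(\binom{n}{2}-1-m)$. This yields
\[
\dim I = (2n-4) + m\bigl(\tbinom{n}{2}-1-m\bigr) = \dim \Grass_m(V\wedge V) - (m-2n+4),
\]
which is strictly less than $\dim \Grass_m(V\wedge V)$; hence $\pi_1(I)$ is a proper closed subset and $U_{n,m}\ne\emptyset$.

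\emph{Step 3 (Uniform vanishing).} Let $\mathcal{K}\subset (V\wedge V)\otimes \mathcal{O}_{U_{n,m}}$ denote the tautological sub-bundle. The explicit presentation of the Koszul module is functorial in $K$, so it produces a graded coherent sheaf $\mathcal{W}_\bullet = \Wey_\bullet(V,\mathcal{K})$ of $\Sym(V)$-modules on $U_{n,m}$, with fibre at $K$ equal to $\Wey(V,K)$. From the presentation one reads off that $\mathcal{W}_\bullet$ is generated as an $\Sym(V)$-module in bounded degree $d_0$ depending only on $n$, so that $\Wey_q(V,K)=V\cdot\Wey_{q-1}(V,K)$ for all $K$ and all $q>d_0$. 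Hence once $\Wey_q(V,K)=0$ for some $q>d_0$, all higher graded pieces vanish as well. By upper semi-continuity of fibre dimensions of coherent sheaves, the sets
\[
U_q=\bigl\{K\in U_{n,m}: \Wey_q(V,K)=0\bigr\}
\]
are Zariski open in $U_{n,m}$ and, by the previous observation, form an ascending chain for $q>d_0$. Their union equals $U_{n,m}$, since for each $K\in U_{n,m}$ the condition $\RR(V,K)=\{0\}$ together with the identification of the resonance variety with the support of the Koszul module (away from the origin) forces $\Wey(V,K)$ to be finite-dimensional, and therefore eventually zero. Noetherianity of $U_{n,m}$ then supplies some $q=q(n,m)$ with $U_q=U_{n,m}$, giving the uniform vanishing.

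The main obstacle is Step 3: one must check carefully that the Koszul module genuinely varies as a coherent sheaf of $\Sym(V)$-modules with a generation degree bounded independently of $K$. Steps 1 and 2 reduce to standard incidence and dimension-count arguments on Grassmannians, handled by the projective dimension theorem and a fibre-dimension computation for $I$.
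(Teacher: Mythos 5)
Your argument is correct and follows essentially the same route as the paper's proof (Propositions \ref{prop:unm}, \ref{prop:genvanish}, and \ref{prop:gen}): reduction via Lemma \ref{lem:r1vanish} to whether $\PP(K^{\perp})$ meets the Pl\"ucker-embedded $\Grass_2(V^*)$ for both openness and the threshold $m\ge 2n-3$, and then an increasing union of Zariski-open loci $\{K : \Wey_q(V,K)=0\}$ together with noetherianity (using generation of $\Wey(V,K)$ in degree $0$) for the uniform vanishing degree. The only difference is that you write out explicitly, via the incidence correspondence and its fibre-dimension count, the standard Grassmannian facts the paper cites as well known.
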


\subsection{Roots, weights, and vanishing resonance}
\label{subsec:delta}
After these general considerations, we narrow our focus, 
and analyze in detail the case when the vector spaces 
$V$ and $K$ are representations of a complex, semisimple 
Lie algebra $\g$.   To start with, we fix a Cartan subalgebra 
$\h\subset \g$ and a set of simple roots $\Delta\subset \h^*$, 
and we denote by $(\, , \,)$ the inner product on $\h^*$ defined 
by the Killing form.

Let $V=V(\lambda)$ be an irreducible $\g$-module 
corresponding to a dominant weight $\lambda$. 
In Lemmas \ref{lem:weights} and \ref{lem:qqq2}, we 
analyze the irreducible summands in $V(\lambda)\wedge V(\lambda)$  
associated to weights of the form $2\lambda -\beta$, for some 
$\beta\in \Delta$: such summands occur if and only 
if $(\lambda,\beta)\ne 0$.  

Returning  to the main theme, we give in 
Proposition \ref{prop:non-vanish} and Theorem \ref{thm:root vanish} 
two very precise criteria that relate the vanishing of the resonance 
varieties associated to $\g$-modules as above to the roots 
and weights of the Lie algebra $\g$.  To summarize these criteria, 
recall that each simple root $\beta\in \Delta$ gives rise to elements 
$x_\beta, y_\beta \in \g$ and $h_\beta \in \h$ which generate a 
subalgebra of $\g$ isomorphic to $\sl_2(\C)$.

\begin{theorem}
\label{thm:intro3}
Let $V=V(\lambda)$ be an irreducible $\g$-module, and 
let $K\subset V\wedge V$ be a submodule. 
Let $V^*=V(\lambda^*)$ be the dual module, 
and let $v_{\lambda^*}$ be a maximal vector for $V^*$. 
\begin{enumerate}
\item 
Suppose there is a root $\beta\in \Delta$ such that 
$(\lambda^*,\beta)\ne 0$, and suppose the vector 
$v_{\lambda^*} \wedge y_{\beta} v_{\lambda^*}$ 
(of weight $2\lambda^*-\beta$) 
belongs to $K^{\perp}$. Then $\RR(V,K)\ne \{0\}$. 

\item 
Suppose that $2\lambda^* -\beta$ is not a 
dominant weight for $K^{\perp}$, for any simple 
root $\beta$. 
Then $\RR(V,K)=\{0\}$.  
\end{enumerate}
\end{theorem}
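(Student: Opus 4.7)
The plan is to dispatch Part (1) by exhibiting an explicit nonzero element of $\RR(V,K)$, and to prove Part (2) by contradiction, using $G$-equivariance of $\RR(V,K)$ to reduce to the highest-weight line of $V^*$ and then doing a weight-space chase inside $K^{\perp}$.

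\textbf{Part (1).} I would take $a := v_{\lambda^*}$ and $b := y_{\beta} v_{\lambda^*}$. The hypothesis $(\lambda^*,\beta)\neq 0$ with $\lambda^*$ dominant is equivalent to $\langle\lambda^*,\beta^{\vee}\rangle \geq 1$, so standard $\sl_2$-theory applied to $\{x_{\beta},y_{\beta},h_{\beta}\}$ gives $y_{\beta} v_{\lambda^*}\neq 0$. This vector has weight $\lambda^*-\beta\neq\lambda^*$, hence is not proportional to $v_{\lambda^*}$, and the assumption $a\wedge b \in K^{\perp}$ then places $v_{\lambda^*}$ in $\RR(V,K)\setminus\{0\}$.

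\textbf{Part (2).} I would argue the contrapositive: assuming $\RR(V,K)\neq\{0\}$, I will construct a highest-weight vector in $K^{\perp}$ of weight $2\lambda^*-\beta$ for some simple $\beta$. Since $K$ is a $\g$-submodule, $K^\perp$ is $\g$-stable and $\RR(V,K)$ is a closed $G$-stable cone in $V^*$, so its projectivization is a nonempty $G$-invariant closed subvariety of $\PP(V^*)$ and must contain a closed $G$-orbit. Because $V^*=V(\lambda^*)$ is irreducible, the only closed $G$-orbit in $\PP(V^*)$ is that of $[v_{\lambda^*}]$, so $v_{\lambda^*}\in\RR(V,K)$: there is $b\in V^*$, not proportional to $v_{\lambda^*}$, with $v_{\lambda^*}\wedge b\in K^{\perp}$. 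As the $\lambda^*$-weight space is one-dimensional, I may assume $b$ has no $\lambda^*$-component. Picking a maximal weight $\mu_0$ with $b_{\mu_0}\neq 0$ and using the $\h$-weight decomposition of $K^{\perp}$, the weight component $v_{\lambda^*}\wedge b_{\mu_0}$ again lies in $K^{\perp}$. Irreducibility of $V^*$ implies $v_{\lambda^*}\in U(\mathfrak{n}^+)\,b_{\mu_0}$ (any highest-weight vector of this finite-dimensional $\mathfrak{n}^+$-submodule must span the $\lambda^*$-line), so expanding in monomials in the simple-root vectors produces simple roots $\beta_{i_1},\dots,\beta_{i_k}$ with $x_{\beta_{i_1}}\cdots x_{\beta_{i_k}} b_{\mu_0}$ a nonzero multiple of $v_{\lambda^*}$. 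Set $\beta := \beta_{i_1}$ and $b^* := x_{\beta_{i_2}}\cdots x_{\beta_{i_k}} b_{\mu_0}$; then $b^*$ has weight $\lambda^*-\beta$ (so this is a weight of $V^*$, which forces $2\lambda^*-\beta$ to be dominant), and $x_{\beta} b^*$ is a nonzero scalar multiple of $v_{\lambda^*}$. Since $x_{\alpha} v_{\lambda^*}=0$ for every positive $\alpha$, the Leibniz rule gives $x_{\alpha}(v_{\lambda^*}\wedge w) = v_{\lambda^*}\wedge x_{\alpha} w$; iterating with the $x_{\beta_{i_j}}$ shows $v_{\lambda^*}\wedge b^*\in K^{\perp}$, and it is nonzero because $b^*$ has weight $\neq\lambda^*$. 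A simple-root check confirms $v_{\lambda^*}\wedge b^*$ is a highest-weight vector (for $\alpha=\beta$ one uses $v_{\lambda^*}\wedge v_{\lambda^*}=0$; for simple $\alpha\neq\beta$, the weight $\lambda^*-\beta+\alpha$ is not a weight of $V^*$, so $x_{\alpha} b^*=0$), producing the contradiction.

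\textbf{Main obstacle.} The most delicate step is the reduction to the highest-weight line, i.e., showing that the nonempty $G$-invariant closed subvariety $\PP(\RR(V,K))\subset\PP(V(\lambda^*))$ must meet $[v_{\lambda^*}]$. This rests on the representation-theoretic input that $[v_{\lambda^*}]$ spans the unique closed $G$-orbit in $\PP(V(\lambda^*))$ (which in turn uses Borel's fixed point theorem and the fact that the only $B$-semi-invariant line in an irreducible module is the highest-weight line); it is here that the irreducibility of $V$ really enters the picture. The downstream weight-chase, by contrast, is routine.
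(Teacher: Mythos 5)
Your proof is correct, and part (1) is the paper's own argument (Proposition \ref{prop:non-vanish} via Lemma \ref{lem:qqq2}): one only needs $y_{\beta}v_{\lambda^*}\neq 0$ and of weight $\neq\lambda^*$. For part (2) the first reduction is also the paper's: Proposition \ref{prop:inf borel} shows $v_{\lambda^*}\in\RR(V,K)$ whenever $\RR(V,K)\neq\{0\}$, phrased there via a Borel fixed point in $\PP(\RR)$ rather than via the unique closed orbit in $\PP(V(\lambda^*))$ --- the two are interchangeable, and both require the integration of the $\g$-action to a $G$-action (Serre) plus $G$-invariance of $K$, which you use implicitly. Where you genuinely diverge is the subsequent weight analysis. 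The paper (Lemma \ref{lem:engel} feeding into Theorem \ref{thm:root vanish}) first applies Engel's theorem to the $\g^{+}$-module $\im(\ell)\cap K^{\perp}$ (after a height computation showing $\g^{+}$ acts nilpotently on $K^{\perp}$) to get a $\g^{+}$-invariant $u=v_{\lambda^*}\wedge w\in K^{\perp}$, and then runs a dichotomy on the weight components $w_{\nu}$: either some $x_{\beta}w_{\nu}\neq 0$, forcing $\lambda^*+\nu=2\lambda^*-\beta\in\VV(K^{\perp})$, or $w_{\nu}$ is itself maximal, hence proportional to $v_{\lambda^*}$, killing $u_{\lambda^*+\nu}$. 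You instead work bottom-up: take one weight component $b_{\mu_0}$ of the witness, use irreducibility of $V^*$ to raise it to $v_{\lambda^*}$ by a monomial in simple root vectors, stop one step short at $b^*$ of weight $\lambda^*-\beta$, and verify directly that $v_{\lambda^*}\wedge b^*$ is a nonzero maximal vector of weight $2\lambda^*-\beta$ in $K^{\perp}$. This replaces the Engel step on $K^{\perp}$ by the more elementary fact that $U(\g^{+})b_{\mu_0}$ contains a maximal vector of $V^*$ (a highest-height weight vector of that $\g^{+}$-submodule already works), and it has the small bonus of exhibiting the summand $V(2\lambda^*-\beta)\subseteq K^{\perp}$ explicitly. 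All the individual steps you cite check out, including the observation that $\lambda^*-\beta$ being a weight of $V^*$ forces $\langle\lambda^*,\beta\rangle\geq 1$ and hence the dominance of $2\lambda^*-\beta$.
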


As a special case of this theorem, we single out in  
Corollary \ref{cor:mult free} the following situation. 

\begin{corollary}
\label{cor:m1}
In the setup from above, 
$\RR(V,K)=\{0\}$ if and only if 
$2\lambda^* - \beta$ is not a dominant weight 
for $K^{\perp}$, for any simple root $\beta$ 
such that $(\lambda^*,\beta)\ne 0$.
\end{corollary}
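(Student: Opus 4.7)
The plan is to deduce both implications from Theorem~\ref{thm:intro3}, with the weight-space analysis of $V^*\wedge V^*$ recorded in Lemmas~\ref{lem:weights} and~\ref{lem:qqq2} serving as the bridge between its two halves. The key fact extracted from those lemmas is that, when $(\lambda^*,\beta)\ne 0$, the weight space of $V^*\wedge V^*$ of weight $2\lambda^*-\beta$ is one-dimensional and spanned by $v_{\lambda^*}\wedge y_\beta v_{\lambda^*}$, while for $(\lambda^*,\beta)=0$ this weight does not occur in $V^*\wedge V^*$ at all. This collapses the logical gap between ``$2\lambda^*-\beta$ is a dominant weight of $K^\perp$'' and ``the specific vector $v_{\lambda^*}\wedge y_\beta v_{\lambda^*}$ lies in $K^\perp$,'' which is what distinguishes the hypotheses of parts (1) and (2) of Theorem~\ref{thm:intro3}.

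For the ``if'' direction, I would argue as follows. Assume $2\lambda^*-\beta$ is not a dominant weight of $K^\perp$ for any simple root $\beta$ with $(\lambda^*,\beta)\ne 0$. For the remaining simple roots, those with $(\lambda^*,\beta)=0$, the weight $2\lambda^*-\beta$ does not appear in $V^*\wedge V^*$ at all, so it is vacuously not a dominant weight of $K^\perp$. Hence the hypothesis of Theorem~\ref{thm:intro3}(2) is satisfied for every simple root, and we conclude $\RR(V,K)=\{0\}$.

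For the ``only if'' direction I would proceed by contrapositive. Assume there is a simple root $\beta$ with $(\lambda^*,\beta)\ne 0$ for which $2\lambda^*-\beta$ is a dominant weight of $K^\perp$, so that $K^\perp$ contains a maximal vector $w$ of that weight. The crucial step is to pin down $w$: since the weight space of $V^*\wedge V^*$ of weight $2\lambda^*-\beta$ is one-dimensional and spanned by $v_{\lambda^*}\wedge y_\beta v_{\lambda^*}$, we must have $w=c\cdot v_{\lambda^*}\wedge y_\beta v_{\lambda^*}$ for some nonzero scalar $c$. Consequently $v_{\lambda^*}\wedge y_\beta v_{\lambda^*}\in K^\perp$, and Theorem~\ref{thm:intro3}(1) yields $\RR(V,K)\ne\{0\}$. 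The main obstacle is precisely this identification; once one is willing to quote the one-dimensionality of the relevant weight space (for which the assumption $(\lambda^*,\beta)\ne 0$ is indispensable, as otherwise $y_\beta v_{\lambda^*}=0$), the corollary follows as a direct consequence of Theorem~\ref{thm:intro3}.
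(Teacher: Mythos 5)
Your proof is correct and follows essentially the same route as the paper's: both directions reduce to Theorem~\ref{thm:intro3} (i.e.\ Proposition~\ref{prop:non-vanish} and Theorem~\ref{thm:root vanish}), with the multiplicity-one phenomenon for the weight $2\lambda^*-\beta$ forcing the submodule of $K^{\perp}$ of that highest weight to be $K(\lambda^*,\beta)=\spn_{\g}\{v_{\lambda^*}\wedge y_\beta v_{\lambda^*}\}$. The only cosmetic difference is that you pin this down via the one-dimensionality of the weight space $(V^*\wedge V^*)_{2\lambda^*-\beta}$ (a short computation not literally stated in Lemmas~\ref{lem:weights}--\ref{lem:qqq2}), whereas the paper quotes the uniqueness of the $V(2\lambda^*-\beta)$-summand from Lemma~\ref{lem:weights}; both rest on the same fact.
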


In the case when $\g=\sl_2(\C)$, all irreducible representations 
are of the form $V_n=V(n\lambda_1)$, for some $n\ge 0$, where 
$V_1$ is the defining representation.  Moreover, $V_n\wedge V_n$ 
decomposes into multiplicity-free irreps of the form $V_{2n-2-4j}$, 
with $j\ge 0$. The modules $W(n)=\Wey(V_n, V_{2n-2})$ 
have been studied by J.~Weyman and D.~Eisenbud, in an attempt 
to establish an improved version of a conjecture of M.~Green 
regarding the syzygies of canonically embedded curves, see \cite{Ei}.    

In Corollary~\ref{cor:fw}, we recover and strengthen a result 
from \cite{Ei}, as follows: For any $\sl_2(\C)$-submodule 
$K\subset V_n\wedge V_n$, the corresponding Koszul 
module $\Wey(V_n,K)$ is finite-dimensional over $\C$  
if and only if $\Wey(V_n,K)$ is a quotient of $W(n)$. 
The determination of the Hilbert series of these graded 
modules remains an interesting open problem.

\subsection{Alexander invariants, resonance, and Torelli groups}
\label{subsec:aut} 

We switch now to invariants associated to a finitely generated 
group $G$. Set  $V=H_1(G,\C)$, 
identify $V^*=H^1(G,\C)$, and declare $K^{\perp}$ 
to be the kernel of the cup-product map,  
$\cup_G\colon V^*\wedge V^*\to H^2(G,\C)$. 
In this case, the variety $\RR(G)=\RR(V,K)$ coincides 
with the first resonance variety of the group. 
Moreover, the Koszul module, $\B(G)=\Wey(V,K)$,  
can be thought of as an infinitesimal version of the 
classical Alexander invariant, $B(G)=H_1(G',\C)$, 
viewed as a module over the group algebra $\C[G_{\ab}]$, 
or its reduced version, $\BB(G)$, viewed as a module over 
$\C[G_{\ab}/\Tors(G_{\ab})]$. The $I$-adic associated graded
objects, $\gr_{\scriptscriptstyle{\bullet}\!} B(G)$ and 
$\gr_{\scriptscriptstyle{\bullet}\!} \BB(G)$, are both
graded modules over the polynomial algebra $\Sym (V)$.

Let $\Aut(G)$ be the automorphism group of $G$, 
and let $\Out(G)=\Aut(G)/\Inn(G)$ be its outer automorphism 
group.  The Torelli group, $\T_G$, is the kernel of the 
natural homomorphism $\Out(G)\to \Aut(G_{\ab})$
which sends an (outer) automorphism to the induced 
map on the abelianization.  Using our machinery, 
we give a unified proof of two key results from \cite{DP} 
and \cite{PS-johnson} concerning vanishing resonance 
for the Torelli groups of surface groups, and recover
some additional information on their Alexander invariants. 
 
First, we consider the free group on $n\ge 4$ generators;  
its Torelli group, $\OA_n=\T_{F_n}$, is finitely generated.  
Work of Andreadakis, Cohen--Pakianathan, Farb, and Kawazumi 
shows that the natural action of $\GL_n (\Z)$ on $H_1(\OA_n,\Z)$ 
defines an irreducible $\sl_n(\C)$-module structure on $V=H_1(\OA_n,\C)$.  
Furthermore, work of Pettet \cite{Pe} identifies 
$V^*$ with $V( \lambda_1 +\lambda_{n-2})$ and 
$K^{\perp} = \ker(\cup_{\OA_n})$ with 
$V(\lambda_1+\lambda_{n-2}+\lambda_{n-1})$, 
where $\lambda_1,\dots  ,\lambda_{n-1}$ are the 
fundamental dominant weights for $\sl_n(\C)$.
With this notation, we prove in Theorem \ref{thm:rroan} 
and Corollary \ref{cor:boan} the following result.

\begin{theorem}
\label{thm:tfn}
For each $n\ge 4$, the resonance variety $\RR(\OA_n)=\RR(V,K)$ 
vanishes. Moreover, 
$\dim \Wey(V,K) <\infty$ and $\dim \gr_q B(\OA_n) \le \dim \Wey_q(V,K)$, for 
all $q\ge 0$.
\end{theorem}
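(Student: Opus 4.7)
My plan is to apply the vanishing criterion of Corollary \ref{cor:m1} (equivalently, Theorem \ref{thm:intro3}(2)) to the $\sl_n(\C)$-module data supplied by Pettet, and then deduce the bound on the Alexander invariant from the general comparison between the Koszul module $\Wey(V,K)$ and the associated graded of $B(G)$ indicated in Subsection \ref{subsec:aut}.

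I will work in the standard coordinates $e_1,\dots,e_n$ on the Cartan of $\sl_n(\C)$ (modulo $e_1+\cdots+e_n$), so that $\alpha_i=e_i-e_{i+1}$ and $\lambda_j=e_1+\cdots+e_j$. The relevant data are $\lambda^*=\lambda_1+\lambda_{n-2}$ and the highest weight of $K^\perp$, namely $\nu:=\lambda_1+\lambda_{n-2}+\lambda_{n-1}$. Since $\langle\lambda_j,\alpha_i^\vee\rangle=\delta_{ij}$, the only simple roots $\beta$ with $(\lambda^*,\beta)\ne 0$ are $\alpha_1$ and $\alpha_{n-2}$, so by Corollary \ref{cor:m1} the argument reduces to checking that neither $2\lambda^*-\alpha_1$ nor $2\lambda^*-\alpha_{n-2}$ is a weight of $V(\nu)=K^\perp$.

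A short calculation gives $2\lambda^*-\alpha_1=\lambda_2+2\lambda_{n-2}$ and $2\lambda^*-\alpha_{n-2}=2\lambda_1+\lambda_{n-3}+\lambda_{n-1}$; both are dominant, so I still need to verify that their differences with $\nu$ do not lie in the positive root cone $\sum_i \Z_{\ge 0}\alpha_i$. A direct computation yields
\[
\nu-(2\lambda^*-\alpha_1)=e_{n-1}-e_2,\qquad \nu-(2\lambda^*-\alpha_{n-2})=e_{n-2}-e_1,
\]
and expansion in the basis $\alpha_i=e_i-e_{i+1}$ forces each of these to carry a negative coefficient (of $\alpha_2$ and of $\alpha_1$, respectively). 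Hence neither candidate is a weight of $K^\perp$, and Corollary \ref{cor:m1} yields $\RR(\OA_n)=\{0\}$; the finiteness $\dim_\C\Wey(V,K)<\infty$ then follows because, away from the origin, $\RR(V,K)$ is the support of the graded $\Sym(V)$-module $\Wey(V,K)$.

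Finally, the inequality $\dim\gr_q B(\OA_n)\le \dim\Wey_q(V,K)$ will be obtained by applying to $G=\OA_n$ the general surjection from the Koszul module onto $\gr_\bullet B(G)$ attached to any finitely generated group, which is part of the framework of Subsection \ref{subsec:aut}. The Lie-theoretic computations above are routine once the correct inputs are in place, so the real obstacle is upstream: one must invoke Pettet's identification of $V^*$ and $K^\perp$ with the right $\sl_n(\C)$-irreducibles, since an indexing shift would move the nonzero pairings $(\lambda^*,\beta)$ to different simple roots and change the weights one has to test.
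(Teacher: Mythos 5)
Your proposal is correct and follows essentially the same route as the paper: Pettet's identification of $V^*=V(\lambda_1+\lambda_{n-2})$ and $K^{\perp}=V(\lambda_1+\lambda_{n-2}+\lambda_{n-1})$, the roots-and-weights vanishing criterion, and the general inequality \eqref{prop:grb} relating $\gr_q B(\OA_n)$ to $\Wey_q(V,K)$ from Section \ref{sect:alexinv}. The only cosmetic difference is that the paper checks directly that $2\lambda^*-\mu=t_1-t_{n-1}$ is not a simple root and applies Theorem \ref{thm:root vanish}, whereas you run the equivalent dual check of Corollary \ref{cor:m1} on the two simple roots $\alpha_1,\alpha_{n-2}$ not orthogonal to $\lambda^*$; your weight computations are correct (and in fact prove slightly more than needed, since for the irreducible $K^{\perp}$ it would suffice to observe $2\lambda^*-\beta\ne\lambda_1+\lambda_{n-2}+\lambda_{n-1}$).
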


Next, we consider the fundamental group of a Riemann 
surface of genus $g\ge 3$.  The corresponding Torelli group, 
$\T_g=\T_{\pi_1(\Sigma_g)}$, is known to be finitely generated. 
Work of D.~Johnson \cite{J3} shows that the natural 
$\Sp_{2g} (\Z)$-action on $H_1(\T_g,\Z)$ defines 
an irreducible $\sp_{2g}(\C)$-module structure on $V=H_1(\T_g,\C)$.    
Work of  Hain \cite{Ha} identifies $V^*$ with 
$V(\lambda_3)$ and $K^{\perp}$ with $V(2\lambda_2)\oplus V(0)$, 
where $\lambda_1,\dots  ,\lambda_g$ are the 
fundamental dominant weights for $\sp_{2g}(\C)$.
In Theorem \ref{thm:rrtor} and Corollary \ref{cor:btg}, 
we prove the following result.

\begin{theorem}
\label{thm:tg}
For each $g\ge 4$, the resonance variety $\RR(\T_g)=\RR(V,K)$ vanishes. 
Moreover, if $g\ge 6$,  then 
$\dim \widetilde{B} (\T_g) = \dim \Wey (V,K)<\infty$. 
\end{theorem}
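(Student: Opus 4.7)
The plan is to specialize the roots-and-weights criterion developed earlier to $\g=\sp_{2g}(\C)$, taking $V^*=V(\lambda_3)$ and $K^{\perp}=V(2\lambda_2)\oplus V(0)$ as supplied by Hain's identifications from \cite{Ha}. Since every irreducible $\sp_{2g}$-module is self-dual, I will set $\lambda^*=\lambda_3$ and work in the standard orthonormal basis $\epsilon_1,\dots,\epsilon_g$ of $\h^*$ in which $\lambda_k=\epsilon_1+\cdots+\epsilon_k$, the simple roots are $\alpha_i=\epsilon_i-\epsilon_{i+1}$ for $i<g$, and $\alpha_g=2\epsilon_g$.

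For the vanishing $\RR(\T_g)=\{0\}$, the natural tool is Corollary \ref{cor:m1}. The pairing $(\lambda_3,\alpha_j)$ vanishes for $j\ne 3$, so only $\beta=\alpha_3$ needs to be tested. For $g\ge 4$ we have $\alpha_3=\epsilon_3-\epsilon_4$, so
\[
2\lambda_3-\alpha_3 \;=\; 2\epsilon_1+2\epsilon_2+\epsilon_3+\epsilon_4,
\]
which is dominant and nonzero. Its difference to $2\lambda_2$ is $2\lambda_2-(2\lambda_3-\alpha_3)=-\epsilon_3-\epsilon_4$, which cannot be expressed as a non-negative integer combination of simple roots (the coefficient of $\alpha_3$ would have to be $-1$). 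Therefore $2\lambda_3-\alpha_3$ does not lie at or below $2\lambda_2$ in the dominance order, so is not a weight of $V(2\lambda_2)$, and plainly not a weight of $V(0)$ either. Corollary \ref{cor:m1} then delivers the vanishing.

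For the finer statement under $g\ge 6$, the finite-dimensionality of $\Wey(V,K)$ over $\C$ is immediate from the vanishing just proved, since a finitely generated graded $\Sym(V)$-module with support concentrated at the origin is finite-dimensional (a principle already reflected in Theorem \ref{thm:intro1}). The identification $\dim \BB(\T_g)=\dim\Wey(V,K)$ reduces, in the framework of Section \ref{subsec:aut}, to showing that the canonical surjection from $\Wey(V,K)$ onto $\gr\BB(\T_g)$ is an isomorphism; by the general correspondence between infinitesimal and classical (reduced) Alexander invariants, this boils down to a $1$-formality input for $\T_g$, which is Hain's theorem from \cite{Ha} and is available precisely for $g\ge 6$.

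The main obstacle is this last step. The Lie-combinatorial verification in the first two paragraphs is mechanical once the $\sp_{2g}$-module identifications for $V^*$ and $K^{\perp}$ are accepted; what requires genuine work is passing from the infinitesimal Koszul module to the associated graded of the classical reduced Alexander invariant, and it is precisely the quadratic/formal presentation of $\T_g$ due to Hain, valid for $g\ge 6$ only, that forces the stronger hypothesis in the second half of the theorem.
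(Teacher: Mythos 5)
Your first paragraph reproduces the paper's argument for the vanishing statement essentially verbatim: Corollary \ref{cor:mult free} applied to $\lambda^*=\lambda_3$ with the single relevant simple root $t_3-t_4$ (valid only for $g\ge 4$), giving $2\lambda_3-(t_3-t_4)=\lambda_2+\lambda_4$, which is not in $\VV(K^{\perp})=\{2\lambda_2,0\}$. (Your dominance-order computation is more than is needed --- the criterion only asks whether $\lambda_2+\lambda_4$ equals one of the highest weights $2\lambda_2$ or $0$ of the summands of $K^{\perp}$, which is immediate --- but it is correct.) The finite-dimensionality of $\Wey(V,K)$ then follows from Lemma \ref{lem:supp}, as you say.

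The second half has a genuine gap. You reduce $\dim\widetilde{B}(\T_g)=\dim\Wey(V,K)$ to the claim that the map from $\Wey(V,K)$ onto $\gr\widetilde{B}(\T_g)$ is an isomorphism, and you assert that this ``boils down to'' Hain's $1$-formality of $\T_g$ for $g\ge 6$. But $1$-formality (via Corollary \ref{cor:formal}\eqref{f1}) only identifies $\Wey(V,K)=\B(\T_g)$ with the \emph{associated graded} module $\gr\widetilde{B}(\T_g)$; the theorem asserts an equality of dimensions with $\widetilde{B}(\T_g)$ itself. These need not agree: for a module $M$ over $\widetilde{R}=\C[G_{\abf}]$ one can have $\dim\gr M<\dim M$ when $M$ has support away from the augmentation ideal $I$ (e.g.\ $M=\C[t^{\pm 1}]/(t-2)$ has $\gr M=0$). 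The missing ingredient is the nilpotence of the $\widetilde{R}$-module $\widetilde{B}(\T_g)$ --- i.e.\ $I^q\widetilde{B}(\T_g)=0$ for some $q$ --- which is a nontrivial theorem of Dimca--Hain--Papadima \cite{DHP}, valid for $g\ge 4$, and is exactly what Corollary \ref{cor:formal}\eqref{f3} requires in addition to $1$-formality. Without citing this input your argument only yields $\dim\gr\widetilde{B}(\T_g)=\dim\Wey(V,K)$, which is weaker than the stated conclusion.
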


\section{Koszul modules and resonance varieties}
\label{sect:res}

\subsection{Koszul modules}
\label{subsec:wey}
Let $V$  be a non-zero, finite-dimensional vector space 
over $\C$.  Let $S=\Sym(V)$ be the symmetric algebra 
on $V$, let $\bigwedge V$ be the exterior algebra on $V$, 
and let $(S\otimes_{\C} \bigwedge V, \delta)$ be the Koszul resolution, 
where the differential $\delta_p\colon S\otimes_{\C} \bigwedge^p V \to 
S\otimes_{\C} \bigwedge^{p-1} V$ is the $S$-linear map given 
on basis elements by  
\begin{equation}
\label{eq:delta}
\delta_p( v_{i_1}\wedge \cdots \wedge v_{i_p}) = 
\sum_{j=1}^{p} (-1)^{j-1} v_{i_j} \otimes ( v_{i_1}\wedge \cdots 
\wedge \widehat{v_{i_j}} \wedge \dots \wedge v_{i_p}).
\end{equation}

\begin{definition}
\label{def:koszul mod}
The {\em Koszul module}\/ associated to a linear subspace  
$K\subseteq V\wedge V$ is the $S$-module $\Wey(V,K)$ presented as 
\begin{equation} 
\label{eq:kmod}
\xymatrix{
S \otimes_{\C} \big(\bigwedge^3 V \oplus K \big) 
\ar^(.55){\delta _3 + \id \otimes \iota}[rr] &&
S \otimes_{\C}  \bigwedge^2 V \ar@{->>}[r] & \Wey(V,K)
},
\end{equation}
where the group $\bigwedge^3 V$ is in degree $1$, 
the groups $K$ and $\bigwedge^2 V$ are in degree $0$, 
and $\iota \colon K \to V\wedge V$ is the inclusion map.
\end{definition}

Alternatively, we may write
\begin{equation} 
\label{eq:wmod again}
\Wey(V,K) = \im(\delta_2)/\im(\delta_2 \circ (\id \otimes \iota)).
\end{equation}

The $S$-module $\Wey=\Wey(V,K)$ inherits a natural grading 
from the free $S$-module $S \otimes_{\C}  \bigwedge^2 V$; 
we will denote by $\Wey_q$ its $q$-th graded piece.  

\begin{remark}
\label{rem:weyq}
Note that $\Wey$ is generated as an $S$-module by its degree $0$ 
piece, $\Wey_0=(V\wedge V)/K$.  Thus, 
the $S$-module $\Wey(V,K)$ is a finite-dimensional $\C$-vector space 
if and only if there is an integer $q\ge 0$ such that $\Wey_q(V,K)=0$. 
\end{remark}

The above construction enjoys some nice functoriality properties.
For instance, suppose $(V'\wedge V',K')$ is another pair of 
vector spaces as above, and suppose there is a linear map 
$\varphi\colon V\to V'$ such that 
$\varphi \wedge \varphi\colon V\wedge V\to V'\wedge V'$ 
takes $K$ to $K'$.  Write $S'=\Sym(V')$, and let 
$\Sym(\varphi)\colon S\to S'$ be the induced ring morphism.  
The commuting diagram
\begin{equation} 
\label{eq:gen weyman}
\xymatrix{
S \otimes_{\C} \big(\bigwedge^3 V \oplus K \big) 
\ar^{\Sym(\varphi)\otimes (\bigwedge^3\varphi \oplus \bigwedge^2\varphi |_{K})}[d]
\ar^(.55){\delta _3 + \id \otimes \iota}[rr] &&
S \otimes_{\C}  \bigwedge^2 V \ar@{->>}[r] 
 \ar^{\Sym(\varphi)\otimes \bigwedge^2\varphi}[d] 
& \Wey(V,K)\,  \ar@{-->}^{\Wey(\varphi)}[d]\\
S' \otimes_{\C} \big(\bigwedge^3 V' \oplus K' \big) 
\ar^(.55){\delta'_3 + \id \otimes \iota'}[rr] &&
S' \otimes_{\C}  \bigwedge^2 V' \ar@{->>}[r] & \Wey(V',K'). 
}
\end{equation}
defines a $\Sym(\varphi)$-equivariant map $\Wey(\varphi)$ 
between the respective Koszul modules. 
Note that, if $\varphi$ is surjective, then $\Wey(\varphi)$ is also surjective.

\subsection{Resonance varieties }
\label{subsec:res VK}

Let $V^*=\Hom_{\C}(V,\C)$ be the dual vector space, 
and let $K^{\perp}\subseteq V^*\wedge V^* = (V\wedge V)^*$ 
be the linear subspace consisting of all functionals 
that vanish identically on the subspace $K\subseteq V\wedge V$.

\begin{definition}
\label{def:res}
The {\em resonance variety}\/ associated to the pair $(V\wedge V,K)$ 
is defined as 
\begin{equation}
\label{eq:rvk again}
\RR(V,K) =\{ a \in V^* \mid \text{$\exists\, b \in V^*$ 
such that $a\wedge b\ne 0$ and $a\wedge b \in K^{\perp}$}\} \cup \{0\}.
\end{equation}
\end{definition}

It is readily seen that $\RR(V,K)$ is a conical, Zariski-closed 
subset of the affine space $V^*$. 
For instance, if $K=0$ and if $\dim V>1$, then $\RR(V,0) =V^*$. 
At the other extreme, $\RR(V,V\wedge V)=\{0\}$. 

This construction also enjoys some pleasant functoriality properties.
For instance, suppose $\varphi\colon V\surj V'$ is a surjective 
linear map inducing a morphism from $(V\wedge V,K)$ to   
$(V'\wedge V',K')$. Then, the linear map $\varphi^*\colon (V')^*\inj V^*$ 
restricts to an embedding $\RR(\varphi)\colon \RR(V',K')\inj \RR(V,K)$ 
between the respective resonance varieties. 

After identifying the ring $S=\Sym(V)$ with the coordinate 
ring of $V^*$, we may view the support of the $S$-module 
$\Wey(V,K)$ as a subvariety of the affine space $V^*$. 
The next lemma connects this support variety to the resonance 
variety of the pair $(V\wedge V,K)$.

\begin{lemma}
\label{lem:supp}
Let $K\subseteq V\wedge V$ be a linear subspace.  Then:
\begin{enumerate}
\item \label{rr1}
Away from the origin, $\RR(V,K) = \supp (\Wey(V,K))$.
\item  \label{rr2}
$\dim_{\C} \Wey(V,K) < \infty $ if and only if $\RR(V,K)=\{0\}$. 
\end{enumerate}
\end{lemma}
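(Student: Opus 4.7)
The plan is to compute, for each $a\in V^*$, the fiber $\Wey(V,K)\otimes_S \C_a$, where $\C_a = S/\m_a$, and identify when it is nonzero. By Nakayama's lemma, since $\Wey(V,K)$ is finitely generated over the Noetherian ring $S$, a point $a\in V^*$ lies in $\supp(\Wey(V,K))$ if and only if $\Wey(V,K)\otimes_S\C_a\neq 0$. So (\ref{rr1}) reduces to showing that the fiber is nonzero exactly when $a$ satisfies the defining condition of $\RR(V,K)$.

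To carry this out, I would tensor the presentation \eqref{eq:kmod} with $\C_a$. The key observation is that upon evaluation at $a$, the Koszul differential $\delta_3$ becomes contraction $\iota_a$ by $a$, sending $v_1\wedge v_2\wedge v_3$ to $a(v_1)(v_2\wedge v_3)-a(v_2)(v_1\wedge v_3)+a(v_3)(v_1\wedge v_2)$. Thus
\begin{equation*}
\Wey(V,K)\otimes_S\C_a \;\cong\; \bigl(V\wedge V\bigr)\big/\bigl(\iota_a(\textstyle\bigwedge^3 V) + K\bigr).
\end{equation*}
Dualizing, and using that the transpose of $\iota_a\colon \bigwedge^3 V\to \bigwedge^2 V$ is the map $\omega\mapsto a\wedge \omega$ from $\bigwedge^2 V^*$ to $\bigwedge^3 V^*$ (a direct computation), one finds
\begin{equation*}
\bigl(\Wey(V,K)\otimes_S\C_a\bigr)^{\!*} \;\cong\; K^{\perp} \cap \ker\!\bigl(a\wedge\,\cdot\,\colon \textstyle\bigwedge^2 V^* \to \bigwedge^3 V^*\bigr).
\end{equation*}
For $a\ne 0$, exactness of the Koszul complex on $V^*$ identifies $\ker(a\wedge\,\cdot\,) = a\wedge V^*$. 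Hence the fiber is nonzero iff there exists $c\in V^*$ with $a\wedge c\ne 0$ and $a\wedge c\in K^{\perp}$, which is precisely the condition $a\in\RR(V,K)\setminus\{0\}$. This establishes (\ref{rr1}).

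For (\ref{rr2}), invoke Remark \ref{rem:weyq}: $\Wey(V,K)$ is a finitely generated graded $S$-module concentrated in non-negative degrees and generated in degree $0$. If $\Wey(V,K)=0$, then $K=V\wedge V$, so $K^{\perp}=0$ and both conditions hold trivially. Otherwise, the origin belongs to $\supp(\Wey(V,K))$, and $\dim_{\C}\Wey(V,K)<\infty$ is equivalent to $\supp(\Wey(V,K))\subseteq\{0\}$, which by part (\ref{rr1}) is equivalent to $\RR(V,K)\setminus\{0\}=\emptyset$, i.e., $\RR(V,K)=\{0\}$.

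I do not anticipate a real obstacle: the core content is the identification of contraction with $a$ as the $\C_a$-reduction of $\delta_3$, together with the standard Koszul exactness of $(\bigwedge^\bullet V^*, a\wedge\,\cdot\,)$ for $a\ne 0$. The mildest point of care is getting the signs and the direction of the duality $\iota_a^{*}=a\wedge\,\cdot\,$ correct; once that is done, (\ref{rr1}) is immediate, and (\ref{rr2}) follows formally from finite generation in degree $0$.
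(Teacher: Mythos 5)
Your proof is correct and follows essentially the same route as the paper's: both evaluate the presentation \eqref{eq:kmod} at a point $a\in V^*$, dualize, and match the result against the defining condition of $\RR(V,K)$. The only cosmetic difference is that you detect support via Nakayama and the fiber $\Wey(V,K)\otimes_S\C_a$ (using Koszul exactness to identify $\ker(a\wedge\cdot)$ with $a\wedge V^*$), whereas the paper uses Fitting ideals and a direct rank comparison of $\delta_2(a)$ with its transpose $b\mapsto a\wedge b$.
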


\begin{proof}
Let $\delta_p(a) \colon \bigwedge^p V \to 
\bigwedge^{p-1} V$ be the evaluation of the $p$-th Koszul 
differential at a non-zero element $a\in V^*$.  Using 
formula \eqref{eq:wmod again} and the usual description 
of support in terms of Fitting ideals, we deduce that
\begin{equation} 
\label{eq:supp again}
a\in \supp (\Wey(V,K)) \same \dim_{\C} (\im(\delta_2(a)) /
\im(\delta_2(a) \circ  \iota)) \ge 1.
\end{equation}

Let $\delta_p^*(a)$ be the transpose of the matrix $\delta_p(a)$. 
Direct calculation shows that $\delta_2^*(a)\colon V^* \to V^* \wedge V^*$ 
is the map $b \mapsto a\wedge b$.  Hence, 
\begin{equation} 
\label{eq:res again}
a\in \RR(V,K) \same \dim_{\C}(\ker(\pi \circ \delta^*_2(a))/
 \ker(\delta_2^*(a))) \ge 1, 
\end{equation}
where $\pi\colon V^*\wedge V^* \to (V^*\wedge V^*)/K^{\perp}$ 
is the canonical projection. 

Upon identifying the matrix of $\pi$ with the transpose of the 
matrix of $\iota$, part \eqref{rr1} follows by comparing 
formulas \eqref{eq:supp again} 
and \eqref{eq:res again}. Finally, 
part \eqref{rr2} follows from part \eqref{rr1} by standard 
commutative algebra. 
\end{proof}

\subsection{Grassmannians and resonance}
\label{subsec:grass}

As usual, let $K\subseteq V\wedge V$ be a linear subspace.  
Setting $m=\dim K$, we may view $K$ as a point in 
the Grassmannian $\Grass_m(V\wedge V)$ of $m$-planes  
in $V\wedge V$, endowed with the Zariski topology. 

Now let $K^{\perp}\subseteq V^*\wedge V^*$ 
be the orthogonal complement. Then $\PP(K^{\perp})$ may 
be viewed as a codimension $m$ projective subspace in 
$\PP(V^*\wedge V^*)$.  

\begin{lemma}
\label{lem:r1vanish}
Let $\Grass_2(V^*) \inj \PP(V^*\wedge V^*)$ be the Pl\"ucker 
embedding. Then, 
\begin{equation*}
\label{eq:r1is0}
\RR(V,K)=\{0\} \same \PP(K^{\perp})\cap \Grass_2(V^*)=\emptyset.
\end{equation*}
\end{lemma}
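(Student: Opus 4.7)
The plan is to unpack both sides of the claimed equivalence and observe that they are literally the same condition, phrased differently. The key bridge is the classical fact that the image of the Plücker embedding $\Grass_2(V^*)\inj \PP(V^*\wedge V^*)$ is precisely the projectivization of the set of nonzero decomposable $2$-vectors, i.e., those of the form $a\wedge b$ with $a,b\in V^*$ linearly independent.

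First I would rewrite the defining condition in \eqref{eq:rvk again}. An element $a\in V^*\setminus \{0\}$ lies in $\RR(V,K)$ precisely when there exists some $b\in V^*$ with $a\wedge b\ne 0$ and $a\wedge b\in K^{\perp}$. Since $a\wedge b\ne 0$ holds if and only if $a$ and $b$ are linearly independent, this is equivalent to the existence of a $2$-plane $L\subseteq V^*$ containing $a$ whose image under the Plücker map lies in $\PP(K^{\perp})$. Hence $\RR(V,K)\ne \{0\}$ if and only if some such $L$ exists at all, which is exactly the condition that $\PP(K^{\perp})\cap \Grass_2(V^*)$ be non-empty.

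Conversely, if $\PP(K^{\perp})\cap \Grass_2(V^*)\ne \emptyset$, pick a decomposable element $a\wedge b\in K^{\perp}$ with $a,b$ linearly independent; then both $a$ and $b$ are non-zero elements of $\RR(V,K)$, so $\RR(V,K)\ne \{0\}$. Taking contrapositives yields the stated equivalence.

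There is no real obstacle here: the statement is essentially a dictionary between the definition of $\RR(V,K)$ (which is tailored to the existence of a non-trivial decomposable wedge in $K^{\perp}$ through a given vector) and the geometric description of $\Grass_2(V^*)$ inside $\PP(V^*\wedge V^*)$. The only point to handle carefully is the distinction between "$b$ not proportional to $a$" and "$a\wedge b\ne 0$", and the separate bookkeeping of the $0\in \RR(V,K)$ convention versus projectivization, both of which are immediate.
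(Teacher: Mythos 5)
Your argument is correct and matches the paper's approach: the paper simply notes that the lemma ``follows straight from the definition of resonance,'' and your write-up spells out exactly that dictionary between decomposable elements of $K^{\perp}$ and points of $\PP(K^{\perp})\cap \Grass_2(V^*)$.
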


\begin{proof}
Follows straight from the definition of resonance. 
\end{proof}

Now denote the dimension of $V$ by $n$, and fix an 
isomorphism $V=\C^n$. Consider the following 
subset of the Grassmannian of $m$-planes in 
$V\wedge V=\C^{\binom{n}{2}}$:
\begin{equation}
\label{eq:unm}
U_{n,m}=\big\{ K \in \Grass_m (V\wedge V)
\mid \RR(V,K)=\{0\}  \big\}.
\end{equation}

\begin{prop}
\label{prop:unm}
For any integer $m$ with $0\le m\le \binom{n}{2}$, where $n=\dim V$, 
the set $U_{n,m}$ is a Zariski open subset of $\Grass_m(V\wedge V)$.
\end{prop}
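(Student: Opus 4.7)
The plan is to recognize $U_{n,m}$ as the complement of the image, under a projection with complete fibers, of a natural incidence variety; openness then follows from the fact that proper projections are closed maps in the Zariski topology.

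First, I would invoke Lemma \ref{lem:r1vanish} to reformulate membership in $U_{n,m}$: one has $K \in U_{n,m}$ if and only if the Pl\"ucker-embedded Grassmannian $\Grass_2(V^*) \subset \PP(V^* \wedge V^*)$ is disjoint from the codimension-$m$ projective subspace $\PP(K^{\perp})$. With this reformulation in hand, I introduce the incidence variety
\[
\mathcal{I} = \big\{ (K, [\omega]) \in \Grass_m(V \wedge V) \times \Grass_2(V^*) \,\big|\, \omega \in K^{\perp} \big\}.
\]

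Next, I would verify that $\mathcal{I}$ is Zariski closed. Let $\mathcal{S} \subset \mathcal{O} \otimes (V \wedge V)$ denote the tautological rank-$m$ subbundle on $\Grass_m(V \wedge V)$; over a Zariski open cover of the base, choose local frames $k_1(K), \dots, k_m(K)$ depending regularly on $K$. The defining condition $\omega \in K^{\perp}$ amounts to the simultaneous vanishing of the bilinear pairings $\langle \omega, k_i(K) \rangle$, each of which is polynomial in Pl\"ucker coordinates on both factors. Gluing over the open cover shows $\mathcal{I}$ is a closed subvariety of $\Grass_m(V \wedge V) \times \Grass_2(V^*)$.

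Finally, let $\pi_1$ denote projection onto the first factor. Since $\Grass_2(V^*)$ is projective, $\pi_1$ is a closed map, so $\pi_1(\mathcal{I})$ is closed in $\Grass_m(V \wedge V)$. By the reformulation above, this image is exactly the complement of $U_{n,m}$, and so $U_{n,m}$ is Zariski open. The only step requiring any real care is the verification that $\mathcal{I}$ is algebraic, which is a routine local computation with the tautological bundle; no serious obstacle arises.
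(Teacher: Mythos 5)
Your argument is correct and follows essentially the same route as the paper: both first use Lemma \ref{lem:r1vanish} to translate $\RR(V,K)=\{0\}$ into the disjointness of $\PP(K^{\perp})$ from the Pl\"ucker-embedded $\Grass_2(V^*)$, and then observe that the locus of $m$-planes $K$ violating this is Zariski closed. The only difference is that the paper cites this closedness as the well-known fact that planes incident to a projective variety form a closed set (referring to \cite{Har}), whereas you supply the standard proof of that fact via the incidence correspondence and properness of the projection.
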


\begin{proof}
Let $\Grass^m(V^*\wedge V^*)$ be the Grassmannian of 
codimension $m$ planes in $V^*\wedge V^*$. In view of 
Lemma \ref{lem:r1vanish}, under the isomorphism 
$ \Grass_m (V\wedge V) \isom \Grass^m(V^*\wedge V^*)$, 
$K \leftrightarrow K^{\perp}$, the set $U_{n,m}$ corresponds 
to the set 
\begin{equation}
\label{eq:unm alt}
\mathcal{U}_{n}^{m} = \big\{ K^{\perp} \in \Grass^m(V^*\wedge V^*) 
\mid \PP(K^{\perp})\cap \Grass_2(V^*)=\emptyset  \big\}.
\end{equation}

Clearly, $\mathcal{U}_{n}^{m}$ is the complement in $\Grass^m(V^*\wedge V^*)$ 
to the set of codimension $m$ planes incident to $\Grass_2(V^*)$.  As is 
well-known (see e.g.~\cite{Har}), this latter set is Zariski closed. 
The desired conclusion follows at once.
\end{proof}

The next result identifies the threshold value of $m$ (as a function 
of $n$) for which the set $U_{n,m}$ is non-empty.

\begin{prop}
\label{prop:genvanish}
With notation as above,
\begin{equation*}
\label{eq:kn}
U_{n,m}\ne \emptyset \same m\ge 2n -3.
\end{equation*}
\end{prop}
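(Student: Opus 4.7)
The plan is to translate the statement, via Lemma~\ref{lem:r1vanish}, into a projective-geometric question about linear sections of a Grassmannian. Set $N = \binom{n}{2} - 1$, and use the Pl\"ucker embedding to view $X := \Grass_2(V^*)$ as an irreducible closed subvariety of $\PP^N = \PP(V^* \wedge V^*)$ of dimension $2n - 4$. For $K$ of dimension $m$, the space $\PP(K^{\perp})$ is a projective linear subspace of $\PP^N$ of dimension $N - m$, and by Lemma~\ref{lem:r1vanish} the condition $K \in U_{n,m}$ is equivalent to $\PP(K^{\perp}) \cap X = \emptyset$.

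For the forward implication, I would argue by contrapositive: if $m \le 2n - 4$, then $\dim \PP(K^{\perp}) + \dim X = (N - m) + (2n - 4) \ge N$, so the projective dimension theorem forces every such linear subspace of $\PP^N$ to meet the irreducible closed subvariety $X$. Hence $\PP(K^{\perp}) \cap X \ne \emptyset$ for every $K$ of this dimension, and $U_{n,m} = \emptyset$.

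For the reverse implication, assume $m \ge 2n - 3$; the plan is to produce a $K \in U_{n,m}$ via a standard incidence-variety dimension count. Form
\begin{equation*}
\mathcal{I} = \bigl\{(L, x) \in \Grass^m(V^* \wedge V^*) \times X : x \in \PP(L)\bigr\}.
\end{equation*}
The second projection $\mathcal{I} \to X$ is a Grassmann bundle whose fiber over a point is the variety of codimension-$m$ subspaces of $V^* \wedge V^*$ containing a fixed line, of dimension $m(N - m)$; hence $\dim \mathcal{I} = (2n - 4) + m(N - m)$. Comparing with $\dim \Grass^m(V^* \wedge V^*) = m(N - m + 1)$ yields $\dim \Grass^m(V^* \wedge V^*) - \dim \mathcal{I} = m - (2n - 4)$, which is strictly positive in the range $m \ge 2n - 3$. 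Consequently the first projection is not dominant, its (closed) image is a proper subset of $\Grass^m(V^* \wedge V^*)$, and the resulting nonempty open complement coincides, via $K \leftrightarrow K^{\perp}$, with $U_{n,m}$.

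No serious obstacle is anticipated: the argument reduces to two routine dimension counts in Grassmannians combined with the projective dimension theorem. The only care required concerns the degenerate boundary case $m = \binom{n}{2}$, for which $\PP(K^{\perp}) = \emptyset$ and the vanishing is automatic.
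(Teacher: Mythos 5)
Your proposal is correct and follows essentially the same route as the paper: both reduce the question via Lemma~\ref{lem:r1vanish} to whether a codimension-$m$ linear subspace of $\PP(V^*\wedge V^*)$ can avoid the $(2n-4)$-dimensional variety $\Grass_2(V^*)$, and both use the projective dimension theorem for the implication $U_{n,m}\ne\emptyset \Rightarrow m\ge 2n-3$. For the converse, your incidence-variety dimension count simply makes explicit the standard existence fact that the paper invokes without proof, so the two arguments coincide in substance.
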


\begin{proof}
The Grassmannian variety $\GG= \Grass_2(\C^n)$ may be viewed 
as a smooth, irreducible subvariety of the projective space 
$\PP=\PP\big(\C^{\binom{n}{2}}\big)$.  
Thus, if $W=V(f)$ is a hypersurface in $\PP$,  
then $\dim (\GG\cap W)\ge \dim ( \GG) -1$, with equality achieved 
if $f\notin I(\GG)$. 

Let $\K=\PP(K^{\perp})$ be a codimension $m$ projective 
subspace in $\PP$.  
Then 
\[
\dim (\GG\cap \K)\ge \dim \GG - \codim \K = 2(n-2) - m.
\]
Thus, if $m<2n-3$, then $\dim (\GG \cap  \K)\ge 0$, and so 
$\GG\cap \K\ne \emptyset$, showing that $K\notin U_{n,m}$. 

Now suppose $m\ge 2n -3$.  Then, there 
is a codimension $m$ projective subspace $\K\subset \PP$  
which intersects $\GG$ in the empty set.  
Thus, $K\in U_{n,m}$.
\end{proof}

\subsection{The graded pieces of a Koszul module}
\label{subsec:tor}
As before, let $K\subseteq V\wedge V$ be a subspace, 
and let $K^{\perp}\subseteq V^*\wedge V^*$ 
be its orthogonal complement. Set 
\begin{equation}
\label{eq:avk}
A(V,K) = \bigwedge V^* / \text{ideal}(K^{\perp}) .
\end{equation}
Clearly, $A=A(V,K)$ is a finite-dimensional, graded-commutative 
$\C$-algebra; moreover, $A$ is quadratic (i.e., it has a presentation 
with generators in degree $1$ and relations in degree $2$), and 
satisfies $a^2=0$, for all $a\in A^1$. 

Using a result of Fr\"oberg and L\"ofwall \cite[Theorem 4.1]{FL}, as recast 
in \cite[Proposition 2.3]{PS-artin}, as well as  \cite[Theorem 6.2]{PS-imrn}, 
we may reinterpret the (duals of the) graded pieces of 
the Koszul module $\Wey(V,K)$ in terms of the linear 
strand in an appropriate $\Tor$ module.  

\begin{prop}
\label{prop:weyq}
For all $q\ge 0$,
\begin{equation*}
\label{eq:weyq}
\Wey_{q}(V,K)^{*} \cong 
\Tor^{\bigwedge\! V^*}_{q+1}(A(V,K),\C)_{q+2}. 
\end{equation*}
\end{prop}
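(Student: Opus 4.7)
The plan is to reduce the proposition to the linear-strand identification of Fröberg and Löfwall, cited just before the statement, by computing both sides in a common form and then matching them.

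First, I would compute $\Tor^E_{q+1}(A,\C)_{q+2}$ using the minimal free resolution of $\C$ over $E=\bigwedge V^*$. In characteristic zero, this is the Koszul-type resolution with $F_p = E\otimes \Sym^p V$, where $\Sym^p V$ sits in internal degree $p$ and the differential is $d(e\otimes f) = \sum_i (e\wedge e_i)\otimes \partial_{e_i^*} f$. Tensoring over $E$ with $A$ and restricting to internal degree $q+2$ produces the three-term complex
\[
\Sym^{q+2} V \longrightarrow V^*\otimes \Sym^{q+1} V \longrightarrow A_2\otimes \Sym^q V,
\]
whose middle cohomology is the desired $\Tor$ group; here I have used $A_0=\C$, $A_1=V^*$, and $A_2 = \bigwedge^2 V^*/K^\perp$.

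Separately, I would dualize the presentation \eqref{eq:kmod} of $\Wey_q(V,K)$. After applying $(-)^*$ and using the canonical isomorphism $K^* \cong A_2$ (obtained by dualizing the inclusion $K\hookrightarrow \bigwedge^2 V$), this expresses $\Wey_q(V,K)^*$ as the kernel of
\[
\Sym^q V^*\otimes \bigwedge^2 V^* \longrightarrow (\Sym^{q-1} V^*\otimes \bigwedge^3 V^*) \oplus (\Sym^q V^*\otimes A_2),
\]
where the first component is the transpose of the Koszul differential $\delta_3$ and the second is induced by the quotient $\bigwedge^2 V^*\to A_2$.

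The remaining task is to identify these two descriptions. Using the perfect pairings $\Sym^k V\otimes \Sym^k V^*\to \C$ together with the Koszul self-duality relating the resolutions of $\C$ over $E$ and over $S$, the three-term $\Tor$ complex should be naturally isomorphic to (a dual of) the complex appearing in the dualized Koszul-module presentation, in such a way that the kernel-modulo-image description of the $\Tor$ group matches the kernel description of $\Wey_q(V,K)^*$. This matching is the main technical obstacle, since the two complexes live in ambient spaces of different shapes (one involving $V^*\otimes \Sym^{q+1} V$, the other $\Sym^q V^*\otimes \bigwedge^2 V^*$): the identification is not direct, but rather between appropriate subquotients, and is precisely the content of \cite[Theorem 4.1]{FL} in the forms recast in \cite[Proposition 2.3]{PS-artin} and \cite[Theorem 6.2]{PS-imrn}. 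Once it is in place, the proposition follows.
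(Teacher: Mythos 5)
Your proposal is correct and follows essentially the same route as the paper, which likewise offers no self-contained argument for this proposition but simply invokes \cite[Theorem 4.1]{FL} as recast in \cite[Proposition 2.3]{PS-artin} and \cite[Theorem 6.2]{PS-imrn}; your explicit descriptions of the two sides (the dualized presentation of $\Wey_q(V,K)$ as a kernel, and the internal-degree-$(q+2)$ strand of $A\otimes_E F_{\bullet}$) are exactly the standard scaffolding underlying that identification, and you correctly locate the one nontrivial step in the cited results. The only slip is notational: the minimal (Cartan) resolution of $\C$ over $E=\bigwedge V^*$ has $F_p=E\otimes \Sym^p(V^*)$, i.e.\ divided (equivalently, in characteristic zero, symmetric) powers of the degree-one piece $V^*$ rather than of $V$, so the $\Sym^{\bullet}V$ factors in your three-term complex should read $\Sym^{\bullet}V^*$ --- which is in fact what your contraction formula for the differential already presupposes.
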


\begin{example}
\label{ex:chen raag}
Suppose the subspace $K\subseteq V\wedge V$ admits a monomial 
basis, that is to say, suppose there is a basis $\mathsf{V}=\{v_1,\dots , v_n\}$ 
for $V$ and a subset $\mathsf{E}\subseteq \binom{\mathsf{V}}{2}$ 
such that the set $\{v_i \wedge v_j \mid \{i,j\}\in  \mathsf{E}\}$ is a basis for $K$.
Then the algebra $A(V,K)$ is the exterior Stanley-Reisner ring associated 
to the graph $\Gamma$ with vertex set $\mathsf{V}$ and edge set 
$\mathsf{E}$.  Using Proposition \ref{prop:weyq}, we showed 
in \cite[Theorem 4.1]{PS-artin} that the Hilbert series of 
the graded module $\Wey(V,K)$ is given by
\begin{equation}
\label{eq:cut poly}
\sum_{q=0}^{\infty} \dim \Wey_q(V,K) t^{q+2} = 
Q_{\G}\left( \frac{t}{1-t} \right). 
\end{equation}

Here, $Q_{\G}(t)=\sum_{j\ge 2} c_j(\G) t^j$ 
is the ``cut polynomial" of $\G$, with coefficient   
$c_j(\G)$ equal to $\sum_{\W\subset \V\colon  \abs{\W}=j } 
\tilde{b}_0(\G_\W)$, where $\tilde{b}_0(\G_\W)$ is one less 
than the number of components of the full subgraph on $\W$. 
\end{example}

\subsection{A vanishing range}
\label{subsec:stable}
Now suppose the subspace $K$ belongs 
to the Zariski open set $U_{n,m} \subseteq \Grass_m(V\wedge V)$ 
from \eqref{eq:unm}.  Then, as we shall show in the next proposition,  
all the graded pieces of the Koszul module $\Wey(V,K)$ 
vanish beyond a range which depends only on the integers 
$n=\dim V$ and $m=\dim K$.

\begin{prop}
\label{prop:gen}
For each $m\ge 2n-3$, there is an integer 
$q=q(n,m)$ such that $\Wey_q(V,K)=0$, for every $K\in U_{n,m}$.
\end{prop}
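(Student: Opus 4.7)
The plan is to upgrade the pointwise vanishing $\Wey_q(V,K)=0$ (with $q$ depending on $K$) to a uniform statement in $K$, using semicontinuity over the Grassmannian together with Noetherian quasi-compactness.

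First, I would build universal versions of the graded pieces. Over $\Grass_m(V\wedge V)$, let $\mathcal{K}$ denote the tautological rank-$m$ subbundle of the trivial bundle with fiber $V\wedge V$. Reading off Definition~\ref{def:koszul mod} in fixed total degree $q$, the space $\Wey_q(V,K)$ is the cokernel of a linear map
\[
\bigl(\Sym^{q-1} V\otimes \bigwedge\nolimits^3 V\bigr)\oplus \bigl(\Sym^q V\otimes K\bigr)\xrightarrow{\delta_3+\id\otimes \iota}\Sym^q V\otimes \bigwedge\nolimits^2 V,
\]
whose matrix entries depend linearly on $K$. Replacing the $K$-summand by $\Sym^q V\otimes \mathcal{K}$ promotes this to a morphism of locally free sheaves on the Grassmannian, and its cokernel $\mathcal{W}_q$ is a coherent sheaf with fiber $\Wey_q(V,K)$ at $K$. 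Upper semicontinuity of fiber dimensions for coherent sheaves then shows that
\[
Z_q:=\{K\in \Grass_m(V\wedge V)\mid \Wey_q(V,K)=0\}
\]
is Zariski open in $\Grass_m(V\wedge V)$.

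Next I would observe that the $Z_q$ are nested, $Z_q\subseteq Z_{q+1}$. Indeed, \eqref{eq:kmod} presents $\Wey(V,K)$ as a quotient of $S\otimes \bigwedge^2 V$ with all generators in $S$-degree $0$, so the vanishing of $\Wey_q$ forces the vanishing of every $\Wey_{q'}$ with $q'\ge q$. On the other hand, for each individual $K\in U_{n,m}$, Lemma~\ref{lem:supp}(\ref{rr2}) gives $\dim_\C \Wey(V,K)<\infty$ and hence $\Wey_q(V,K)=0$ for some $q$; equivalently, $\{Z_q\cap U_{n,m}\}_{q\ge 0}$ is an ascending open cover of $U_{n,m}$.

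To finish, I would appeal to Noetherian quasi-compactness: the Grassmannian is a Noetherian topological space, so its open subset $U_{n,m}$ is quasi-compact, and an ascending cover by Zariski opens must terminate. This yields a single $q=q(n,m)$ with $Z_q\supseteq U_{n,m}$, as required. The only genuine obstacle is the first step—making $\mathcal{W}_q$ precise enough as a coherent sheaf to invoke semicontinuity—but this reduces to the observation that the presentation map depends linearly on $K$ and so globalizes cleanly via the tautological subbundle; the remaining nesting and quasi-compactness steps are essentially formal.
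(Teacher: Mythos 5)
Your argument is correct and follows essentially the same route as the paper: openness of each vanishing locus $\{K \mid \Wey_q(V,K)=0\}$, the nesting coming from degree-zero generation, exhaustion of $U_{n,m}$ via Lemma~\ref{lem:supp}, and stabilization of the increasing open cover. You merely make explicit two points the paper leaves implicit, namely the coherent-sheaf/semicontinuity justification for openness and the Noetherian quasi-compactness behind the stabilization.
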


\begin{proof}
For each $q\ge 0$, consider the following subset 
of the Grassmannian of $m$-planes in $V\wedge V$:
\begin{equation}
\label{eq:uqnm}
U^q_{n,m}= \big\{ K \in \Grass_m (V\wedge V)
\mid \Wey_q(V,K)=0 \big\}.
\end{equation}
By definition, the vector space $\Wey_q(V,K)$ is the 
cokernel of a matrix whose entries depend continuously 
on the $m$-plane $K\subset V\wedge V$. 
Thus, the set $U^q_{n,m}$ where those matrices have 
maximal rank is a Zariski open subset of $\Grass_m (V\wedge V)$. 

Now, Lemma \ref{lem:supp}\eqref{rr2} and definitions 
\eqref{eq:unm} and \eqref{eq:uqnm} together imply that 
\begin{equation}
\label{eq:unm bis}
U_{n,m}=\bigcup_{q\ge 0} U^q_{n,m}.
\end{equation}
Since the graded $\Sym (V)$-module $\Wey (V,K)$ is generated 
in degree zero, the union in \eqref{eq:unm bis} is an increasing union.  

Putting things together, we conclude that $U_{n,m}$ is 
an increasing union of Zariski open sets.  
Thus, there must exist an integer $q$ (depending on $n$ and $m$) 
such that $U^q_{n,m}=U_{n,m}$.  This finishes the proof.
\end{proof}

For instance, if $\binom{n}{2}-m\le 1$, then $q(n,m)=\binom{n}{2}-m$. 
In general, though, the determination of the integer $q(n,m)$ seems 
to be a challenging, yet interesting problem.  We will come back to this 
point in Section \ref{sect:weyman}.

\section{Some representation theory}
\label{sect:weights}

In this section, we analyze the decomposition into irreducible 
summands of the second exterior power of an irreducible 
representation of a complex, semisimple Lie algebra. 

\subsection{Semisimple Lie algebras}
\label{subsec:reps}
We start by reviewing some basic notions from \cite{Hu72}. 
Let $\g$ be a finite-dimensional, semisimple Lie algebra over $\C$. 
A choice of Cartan subalgebra, $\h\subset \g$, yields a root 
system for $\g$: by definition, this is the set $\Phi\subset \h^*$  
of eigenvalues of the adjoint action of $\h$ on $\g$. 

The Killing form defines an inner product, $(\,,)$ on $\h$.  It also sets 
up an isomorphism $\h \cong \h^*$, which gives by transport of structure 
an inner product on $\h^*$, denoted also by $(\,,)$.  
Given $\alpha, \beta \in \h^*$, write
\begin{equation}
\label{eq:ab}
\langle \alpha, \beta \rangle := \frac{2(\alpha, \beta)}{(\beta,\beta)}. 
\end{equation}
 
Inside the root system $\Phi$, fix a set of positive roots $\Phi^{+}$, 
so that $\Phi = \Phi^{+} \sqcup \Phi^{-}$. 
This choice determines a direct sum decomposition, 
$\g = \g^{-} \oplus \h \oplus \g^{+}$, where 
$\g^{\pm} = \bigcup_{\alpha\in \Phi^{\pm}} \g_{\alpha}$, 
and $\g_{\alpha}=\{x \in \g \mid [h,x]=\alpha(h)x, \forall h\in \h\}$.

Let $\Delta\subset \Phi^{+}$ be the corresponding set of simple roots. 
Note that $\h^*=\h^*_{\Q}\otimes_{\Q} \C$, where $\h^*_\Q$ is the 
$\Q$-vector space spanned by $\Delta$. Define 
the height function, $\height \colon \h^*_{\Q} \to \Q$, as the $\Q$-linear 
function given by $\height (\alpha) = 1$ for every $\alpha\in \Delta$. 

Inside the rational vector space $\h^*_{\Q}$, there are two 
subsets worth singling out:  the integral weight lattice, 
$\Lambda$,  defined as 
\begin{equation}
\label{eq:lambda}
\Lambda =\big\{ \lambda \in \h^*_{\Q} \mid  
\text{$\langle \lambda,\alpha\rangle \in \Z$, for all $\alpha\in \Delta$}\big\}, 
\end{equation}
and the positive cone, $C^+$, defined as 
\begin{equation}
\label{eq:cplus}
C^+ =\Big\{ \gamma \in \h^*_{\Q} \mid \text{
$\gamma=\sum_{\alpha\in \Delta} c_{\alpha} \alpha$, with 
$c_{\alpha} \in \Z_{\ge 0}$}\Big\}.
\end{equation}
Clearly, $\Phi^{+}\subset C^+$. 
The positive cone defines a partial order on the weight lattice 
$\Lambda$:  we say $\nu\preceq \tau$ if $\tau-\nu \in C^+$. 

\subsection{Irreducible representations}
\label{subsec:irreps}
Let $\Lambda^{+}\subset \Lambda$ be the set of 
dominant weights, defined by
\begin{equation}
\label{eq:lambdaplus}
\lambda \in \Lambda^{+}  \same 
\langle \lambda,\alpha\rangle \in \Z_{\ge 0}, \: \forall \alpha\in \Delta.
\end{equation}

This set parametrizes all finite-dimensional, 
irreducible representations of the Lie algebra $\g$:  for each 
dominant weight $\lambda\in \Lambda^{+}$, the corresponding  
representation is denoted by $V(\lambda)$.  
A non-zero vector $v\in V(\lambda)$ is said to be 
a maximal vector (of weight $\lambda$) if $\g^{+}\cdot v=0$.  
Such a vector is uniquely determined (up to non-zero scalars), 
and will be denoted by $v_{\lambda}$.

The dual representation, $V(\lambda)^*$, 
can be written as $V(\lambda)^* = V(\lambda^*)$, 
for a unique weight $\lambda^*\in \Lambda^{+}$.  
We will denote a maximal vector for $V(\lambda^*)$ 
by $v_{\lambda^*}$.

Given a finite-dimensional $\g$-module $U$,   
we associate to it two sets of weights, as follows.  

\begin{definition}
\label{def:vu}
The set of dominant weights occurring in $U$ is 
\[
\VV(U)  =\{ \mu \in  \Lambda^{+} \mid  \text{$U$ contains a  
summand isomorphic to $V(\mu)$}\}
\]
\end{definition}

Denote by $U_{\mu}$ the eigenspace of the $\h$-action 
on $U$, with eigenvalue $\mu$.  

\begin{definition}
\label{def:wu}
The set of weights occurring in $U$ is 
\[
\WW(U)  =\{ \mu \in  \Lambda \mid U_{\mu} \ne 0 \}.
\]
\end{definition}
It follows from the definitions that $\VV(U)\subseteq \WW(U)$.
When viewed as an $\h$-module by restriction, $U$ decomposes as 
\begin{equation}
\label{eq:decomp}
U= \bigoplus_{\mu \in \WW(U)} U_{\mu}. 
\end{equation}

\subsection{A weight test}
\label{subsec:2lamba}

Let $V(\lambda)$ be an irreducible $\g$-module corresponding 
to a dominant weight $\lambda$.  The next lemma provides a 
simple criterion for the existence of irreducible summands with 
certain prescribed weights in the second exterior power of $V(\lambda)$.  

\begin{lemma}
\label{lem:weights}
The representation $V(\lambda) \wedge V(\lambda)$ contains 
a direct summand isomorphic to $V(2\lambda -\beta)$, for some 
simple root $\beta$, if and only if $(\lambda,\beta)\ne 0$.  
When it exists, such a summand is unique.
\end{lemma}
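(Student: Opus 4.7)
The plan is to first compute the weight space $\bigl(V(\lambda) \wedge V(\lambda)\bigr)_{2\lambda - \beta}$ directly, then exhibit a maximal vector in it when it is non-zero.

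\textbf{Weight space computation.} Any occurrence of $2\lambda - \beta$ in $V(\lambda) \wedge V(\lambda)$ comes from a decomposition $2\lambda - \beta = \mu + \nu$ with $\mu,\nu \in \WW(V(\lambda))$. Writing $\mu = \lambda - \sigma$, $\nu = \lambda - \tau$ with $\sigma,\tau \in C^+$ yields $\sigma + \tau = \beta$; since $\beta$ is simple, this forces $\{\sigma,\tau\} = \{0,\beta\}$, so $\{\mu,\nu\} = \{\lambda, \lambda - \beta\}$. Standard $\sl_2$-theory applied to the triple $(x_\beta, h_\beta, y_\beta)$ gives $y_\beta v_\lambda \ne 0$ iff $\langle \lambda,\beta\rangle \ne 0$ iff $(\lambda,\beta)\ne 0$, in which case $\lambda - \beta$ has multiplicity $1$ in $V(\lambda)$. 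So the weight space is zero when $(\lambda,\beta) = 0$ and is one-dimensional, spanned by $v_\lambda \wedge y_\beta v_\lambda$, when $(\lambda,\beta) \ne 0$.

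\textbf{Existence (the ``if'' direction).} Assume $(\lambda,\beta) \ne 0$. Checking that $2\lambda - \beta$ is dominant is routine: $\langle 2\lambda - \beta, \alpha\rangle \ge 0$ for $\alpha \in \Delta \setminus\{\beta\}$ since $\langle \beta, \alpha\rangle \le 0$, and $\langle 2\lambda - \beta, \beta\rangle = 2\langle \lambda, \beta\rangle - 2 \ge 0$ since $\langle \lambda, \beta\rangle \ge 1$. To see that $w := v_\lambda \wedge y_\beta v_\lambda$ is a maximal vector, note that for each $\alpha \in \Delta$ the equality $x_\alpha v_\lambda = 0$ reduces $x_\alpha w$ to $v_\lambda \wedge [x_\alpha, y_\beta]v_\lambda$; for $\alpha \ne \beta$ this vanishes because $\alpha - \beta$ is not a root, while for $\alpha = \beta$ the bracket is $h_\beta$ and $v_\lambda \wedge h_\beta v_\lambda$ is a scalar multiple of $v_\lambda \wedge v_\lambda = 0$. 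By complete reducibility, the submodule generated by $w$ is isomorphic to $V(2\lambda - \beta)$ and splits off as a direct summand.

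\textbf{Necessity and uniqueness.} If $(\lambda,\beta) = 0$, the $(2\lambda-\beta)$-weight space vanishes, so no summand with that highest weight can occur. For uniqueness, suppose $V(\mu) \subseteq V(\lambda) \wedge V(\lambda)$ is a summand whose weight set meets $\{2\lambda - \beta\}$, so $2\lambda - \beta \preceq \mu$. All weights of $V(\lambda) \wedge V(\lambda)$ lie $\preceq 2\lambda$, and $2\lambda$ itself is not a weight (it would require $v_\lambda \wedge v_\lambda \ne 0$); combined with the simplicity of $\beta$, this forces $\mu = 2\lambda - \beta$. The one-dimensionality of this weight space then bounds the multiplicity of $V(2\lambda - \beta)$ by $1$.

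\textbf{Main obstacle.} The proof relies only on standard root-system and $\sl_2$ manipulations, so there is no serious technical hurdle. The most delicate step is the uniqueness analysis: one must preclude both multiple copies of $V(2\lambda - \beta)$ and the absorption of its weight $2\lambda - \beta$ into some larger summand $V(\mu)$ with $\mu \succ 2\lambda - \beta$. The interplay between the simplicity of $\beta$ and the non-appearance of $2\lambda$ as a weight of $V(\lambda)\wedge V(\lambda)$ is what cleanly closes off this possibility.
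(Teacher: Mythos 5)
Your proof is correct, and it takes a genuinely more self-contained route than the paper's. The paper handles the ``only if'' direction by observing that dominance of $2\lambda-\beta$ forces $(2\lambda-\beta,\beta)\ge 0$, hence $2(\lambda,\beta)\ge(\beta,\beta)>0$, and then disposes of existence and uniqueness in one stroke by citing Bourbaki (Ch.~VIII, \S 7, Exercice 17). You instead compute the weight space $\bigl(V(\lambda)\wedge V(\lambda)\bigr)_{2\lambda-\beta}$ from scratch: the reduction $\sigma+\tau=\beta$ with $\sigma,\tau\in C^+$ forcing $\{\sigma,\tau\}=\{0,\beta\}$ is exactly the right observation, and it yields both non-existence (when the weight space vanishes, i.e.\ when $(\lambda,\beta)=0$) and the multiplicity bound (when it is one-dimensional) from a single computation. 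Your verification that $v_\lambda\wedge y_\beta v_\lambda$ is a maximal vector reproduces, essentially verbatim, the computation the paper carries out separately in Lemma~\ref{lem:qqq2}, so in effect you have merged the two lemmas and replaced the external citation by an elementary argument. The extra care in your uniqueness step --- ruling out absorption of the weight $2\lambda-\beta$ into a summand $V(\mu)$ with $\mu\succ 2\lambda-\beta$ via the fact that $2\lambda$ is not a weight of the exterior square --- is not strictly necessary once you know the weight space is one-dimensional (two copies of $V(2\lambda-\beta)$ would already force that weight space to have dimension at least two), but it is correct and makes the structure of $V(\lambda)\wedge V(\lambda)$ near its top weight transparent. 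What your approach buys is independence from the Bourbaki exercise at the cost of some length; the paper recovers the explicit maximal vector anyway in the lemma that follows.
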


\begin{proof}
First suppose $2\lambda -\beta$ is a dominant weight, 
for some $\beta\in \Delta$. Then $(2\lambda-\beta,\alpha) \ge 0$, 
for every $\alpha\in \Delta$. In particular, 
$(2\lambda-\beta, \beta)\ge 0$, which forces $(\lambda,\beta)\ne 0$, 
by the non-degeneracy of the Killing form. 

Conversely, suppose that $(\lambda,\beta)\ne 0$.  Then, 
by \cite[Ch.~VIII, \S 7, Exercice~17]{Bo}, the second exterior 
power of $V(\lambda)$ contains a unique direct summand 
of type $V(2\lambda -\beta)$. 
\end{proof}

\subsection{A decomposable maximal vector}
\label{subsec:max}

For every $\alpha\in \Phi^+$, there exist elements 
$x_\alpha, y_\alpha \in \g$ and $h_\alpha \in \h$ such that 
$\{x_\alpha, y_\alpha, h_\alpha\}\cong \sl_2(\C)$. 
We refer to \cite[Lemma 21.2]{Hu72} 
for basic commutation relations among those elements.  
In particular, $[x_\alpha,y_\beta]=0$, for all distinct 
$\alpha, \beta\in \Delta$.   It is readily seen that 
$\g^+$ is the subspace generated by 
$\{x_\alpha : \alpha\in \Phi^+\}$. 

Let $\lambda\in \Lambda^+$ be a dominant weight, and 
let $v_{\lambda}$ be a maximal vector for $V(\lambda)$. 
Given a simple root $\beta \in \Delta$, let $K(\lambda,\beta)$ 
be the $\g$-invariant subspace of $V(\lambda)\wedge V(\lambda)$ 
spanned by the vector $v_{\lambda}\wedge y_{\beta} v_{\lambda}$. 

\begin{lemma}
\label{lem:qqq2}
Let $\beta\in \Delta$ be a simple root, and let 
$V=V(\lambda)$ be an irreducible representation, with maximal 
vector $v_{\lambda}$.  If $(\lambda,\beta)\ne 0$, then 
$v_{\lambda}\wedge y_{\beta} v_{\lambda}$ 
is a maximal vector for $K(\lambda,\beta)$. 
In particular, $K(\lambda,\beta)$ is an irreducible $\g$-module, 
of highest weight $2\lambda -\beta \in \VV(V\wedge V)$. 
\end{lemma}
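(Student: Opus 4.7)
The plan is to exhibit $v := v_\lambda \wedge y_\beta v_\lambda$ as a non-zero maximal vector in $V(\lambda) \wedge V(\lambda)$, compute its $\h$-weight to be $2\lambda - \beta$, and then invoke the standard fact that a finite-dimensional $\g$-module generated by a maximal vector of weight $\mu$ is irreducible of highest weight $\mu$. So $K(\lambda,\beta)$, being by definition the submodule generated by $v$, will automatically be that irreducible constituent.

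First I would confirm $v \ne 0$. The hypothesis $(\lambda,\beta) \ne 0$ together with dominance of $\lambda$ forces $\langle \lambda, \beta \rangle \in \Z_{>0}$. Restricting $V(\lambda)$ to the $\sl_2$-triple $\{x_\beta, y_\beta, h_\beta\}$, the vector $v_\lambda$ is a highest weight vector of nonzero $h_\beta$-weight, so $y_\beta v_\lambda$ cannot vanish (otherwise $\C v_\lambda$ would be a one-dimensional $\sl_2$-submodule with nonzero $h$-eigenvalue, which does not exist). Since $y_\beta v_\lambda$ has $\h$-weight $\lambda - \beta \ne \lambda$, it is linearly independent from $v_\lambda$, so their wedge is nonzero and has $\h$-weight $2\lambda - \beta$.

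Next I would show $\g^+ \cdot v = 0$. Because $\g^+$ is generated as a Lie algebra by $\{x_\alpha : \alpha \in \Delta\}$, a routine induction on the height of a positive root reduces matters to checking that $x_\alpha \cdot v = 0$ for every simple $\alpha$. Expanding via the Leibniz rule gives
\[
x_\alpha \cdot v \; = \; (x_\alpha v_\lambda) \wedge y_\beta v_\lambda \; + \; v_\lambda \wedge (x_\alpha y_\beta v_\lambda),
\]
where the first summand vanishes by maximality of $v_\lambda$. For the second I would apply the Chevalley commutation $[x_\alpha, y_\beta] = \delta_{\alpha \beta} h_\alpha$: when $\alpha \ne \beta$ this gives $x_\alpha y_\beta v_\lambda = y_\beta x_\alpha v_\lambda = 0$, and when $\alpha = \beta$ it gives $h_\beta v_\lambda = \langle \lambda, \beta \rangle v_\lambda$, which wedged with $v_\lambda$ is zero.

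With both facts in place, $K(\lambda,\beta) = U(\g)\cdot v$ is an irreducible $\g$-submodule of $V(\lambda)\wedge V(\lambda)$ with highest weight $2\lambda-\beta$; this weight is automatically dominant since $K(\lambda,\beta)$ is finite-dimensional, so $2\lambda - \beta \in \VV(V\wedge V)$, in agreement with Lemma \ref{lem:weights}. I do not anticipate a real obstacle: the only non-formal inputs are the Chevalley relation among the simple-root generators and the tiny $\sl_2$-argument for $y_\beta v_\lambda \ne 0$, both entirely standard.
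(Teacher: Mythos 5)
Your proposal is correct and follows essentially the same route as the paper: establish $y_\beta v_\lambda\ne 0$ and its weight $\lambda-\beta$ to get $v_\lambda\wedge y_\beta v_\lambda\ne 0$ of weight $2\lambda-\beta$, then kill it with each simple $x_\alpha$ via $[x_\alpha,y_\beta]=\delta_{\alpha\beta}h_\alpha$ and conclude by the standard fact that a cyclic module generated by a maximal vector inside a finite-dimensional module is irreducible. The only cosmetic difference is that the paper derives $y_\beta v_\lambda\ne 0$ from the direct computation $x_\beta y_\beta v_\lambda=\langle\lambda,\beta\rangle v_\lambda\ne 0$ rather than your one-dimensional $\sl_2$-submodule argument; both are standard and equivalent.
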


\begin{proof}
Set $\mu:=2\lambda -\beta$, and consider the vector 
$y_{\beta} v_{\lambda}\in V(\lambda)$. Since $x_{\beta}v_{\lambda}=0$, 
we have that 
\begin{equation}
\label{eq:serre}
x_{\beta}y_{\beta} v_{\lambda}= h_{\beta} v_{\lambda} 
= \lambda(h_{\beta}) v_{\lambda}  
=2 \frac{(\lambda,\beta)}{(\beta,\beta)} v_{\lambda}\ne 0,
\end{equation}
and thus $y_{\beta} v_{\lambda}\ne 0$.  Moreover, the vector 
$y_{\beta} v_{\lambda}$ has weight $\lambda- \beta \ne \lambda$. 
Thus, $v_{\lambda}$ and $y_{\beta} v_{\lambda}$ 
are linearly independent in $V(\lambda)$.  

Now set $k_{\mu}=v_{\lambda}\wedge y_{\beta} v_{\lambda}$.  
By the above, $k_{\mu}\ne 0$. Note that $k_{\mu}$ has weight 
$\lambda+(\lambda-\beta)=\mu$.
To check the remaining 
condition for maximality, that is, $\g^+ k_{\mu}=0$, it is 
enough to show that $x_\alpha k_{\mu}=0$ for all $\alpha\in \Delta$. 
If $\beta\ne \alpha$, then $[x_{\alpha},y_{\beta}]=0$, and so 
$x_{\alpha} k_{\mu} =v_{\lambda} \wedge 
y_{\beta}x_{\alpha} v_{\lambda}=0$. Moreover, 
$x_{\beta} k_{\mu} =v_{\lambda} \wedge 
x_{\beta} y_{\beta} v_{\lambda}=0$, by \eqref{eq:serre}. 

Thus, $k_{\mu}$ is a maximal vector (of weight $\mu$) for $K(\lambda,\beta)$.  
In other words, $K(\lambda,\beta)\cong V(\mu)$, and we conclude that 
$\mu \in \VV(V \wedge V)$.
\end{proof}

\section{Weights and resonance}
\label{sect:test}

In this section, we prove our main results concerning the 
resonance varieties associated to suitable representations 
of complex semisimple Lie algebras.

\subsection{A non-vanishing criterion}
\label{subsec:not zero}

We start with a test insuring that the resonance variety 
associated to a $\g$-invariant subspace $K\subset V\wedge V$ 
does not vanish. 

Let $V=V(\lambda)$ be an irreducible $\g$-module, corresponding 
to a dominant weight $\lambda\in \Lambda^{+}$, and let $v_{\lambda}$ 
be a maximal vector in $V$. 
Given a simple root $\beta \in \Delta$, recall that $K(\lambda,\beta)$ 
denotes the $\g$-invariant subspace of $V\wedge V$ 
spanned by the vector $v_{\lambda}\wedge y_{\beta} v_{\lambda}$. 

\begin{prop}
\label{prop:non-vanish}
Let $K\subset V\wedge V$ be a $\g$-invariant 
subspace.  Suppose $K^{\perp}\supseteq K(\lambda^*,\beta)$, 
for some $\beta\in \Delta$ with $(\lambda^*,\beta)\ne 0$. 
Then $v_{\lambda^*}$ belongs to $\RR(V,K)$. 
\end{prop}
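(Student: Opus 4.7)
The plan is to verify directly that $a = v_{\lambda^*}$ satisfies the condition for membership in $\RR(V,K)$, using as companion vector $b = y_\beta v_{\lambda^*}$. By Definition \ref{def:res}, what must be checked is that $b$ is not proportional to $a$ and that $a \wedge b \in K^\perp$.

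The second condition is almost by assumption: the vector $v_{\lambda^*} \wedge y_\beta v_{\lambda^*}$ is, by definition, a generator of the $\g$-invariant subspace $K(\lambda^*,\beta) \subset V^* \wedge V^*$, and we are assuming $K^\perp \supseteq K(\lambda^*,\beta)$. So I would simply invoke this containment.

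For the first condition, I would extract the content from the proof of Lemma \ref{lem:qqq2}: the hypothesis $(\lambda^*,\beta) \neq 0$ ensures via the $\sl_2$-computation
\begin{equation*}
x_\beta y_\beta v_{\lambda^*} = h_\beta v_{\lambda^*} = \lambda^*(h_\beta) v_{\lambda^*} = 2\frac{(\lambda^*,\beta)}{(\beta,\beta)} v_{\lambda^*} \neq 0
\end{equation*}
that $y_\beta v_{\lambda^*}$ is nonzero; moreover, it has $\h$-weight $\lambda^* - \beta$, whereas $v_{\lambda^*}$ has weight $\lambda^*$, so the two vectors lie in distinct weight spaces of $V^*$ and are linearly independent. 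Consequently $v_{\lambda^*} \wedge y_\beta v_{\lambda^*} \neq 0$.

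Combining the two observations exhibits the element $b \in V^*$ required by Definition \ref{def:res}, whence $v_{\lambda^*} \in \RR(V,K)$. There is no real obstacle here: Lemmas \ref{lem:weights} and \ref{lem:qqq2} have already done the substantive work of producing a nonzero decomposable wedge of weight $2\lambda^* - \beta$ inside $V^* \wedge V^*$, and the proposition is essentially the translation of that construction through the definition of resonance.
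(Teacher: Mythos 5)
Your proof is correct and follows essentially the same route as the paper: the paper likewise takes $b = y_\beta v_{\lambda^*}$, cites Lemma \ref{lem:qqq2} to see that $v_{\lambda^*}\wedge y_\beta v_{\lambda^*}$ is a nonzero (maximal) vector lying in $K(\lambda^*,\beta)\subseteq K^\perp$, and concludes from the definition of resonance. The only cosmetic difference is that you re-derive the nonvanishing of the wedge from the $\sl_2$-computation inside the proof of Lemma \ref{lem:qqq2}, whereas the paper simply invokes the lemma's statement.
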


\begin{proof}
Since $(\lambda^*,\beta)\ne 0$, Lemma \ref{lem:qqq2} guarantees  
that the $\g$-module $K(\lambda^*,\beta)$ is irreducible, and 
has maximal vector $v_{\lambda^*}\wedge y_{\beta} v_{\lambda^*}$, 
of weight $2\lambda^*-\beta$.  Thus,  
$v_{\lambda^*}\wedge y_{\beta} v_{\lambda^*}$ is a non-zero 
element in $K^{\perp}$.  
Therefore, $v_{\lambda^*}\in \RR(V(\lambda),K)$.
\end{proof}

\subsection{Maximal vectors in resonance varieties}
\label{subsec:lie}

We record now a basic symmetry property enjoyed by 
resonance varieties.

\begin{lemma}
\label{lem:inv res}
Let $G$ be a group, and let $\rho\colon G\to \GL(V)$ be a 
finite-dimensional representation.  Let $K\subset V\wedge V$ 
be a $G$-invariant subspace.  Then, the corresponding 
resonance variety, $\RR(V,K)$, is a $G$-invariant subset 
of $V^*$.  
\end{lemma}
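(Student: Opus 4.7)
The plan is to unwind the definition of $\RR(V,K)$ and use the fact that the $G$-action on $V$ induces compatible actions on all associated tensor constructions. Write $\rho^*\colon G\to \GL(V^*)$ for the contragredient representation, and extend $\rho$ and $\rho^*$ to the exterior squares to obtain linear $G$-actions on $V\wedge V$ and $V^*\wedge V^*$. Under the canonical identification $V^*\wedge V^*=(V\wedge V)^*$, the latter action is the contragredient of the former. This is the one routine bookkeeping step.

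Next I would observe that since $K\subset V\wedge V$ is $G$-invariant, its annihilator $K^{\perp}\subset V^*\wedge V^*$ is also $G$-invariant. Indeed, for any $g\in G$, $\kappa\in K^{\perp}$ and $k\in K$, one has $\langle g\cdot \kappa,k\rangle=\langle \kappa, g^{-1}\cdot k\rangle = 0$ because $g^{-1}\cdot k\in K$.

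Now take $a\in \RR(V,K)$ and $g\in G$; I want to show $g\cdot a\in \RR(V,K)$. The case $a=0$ is trivial since the action is linear. If $a\ne 0$, choose $b\in V^*$ with $a\wedge b\ne 0$ and $a\wedge b\in K^{\perp}$, as provided by Definition \ref{def:res}. Set $b'=g\cdot b$ and apply $g$ to the relation: since $g$ acts on $V^*\wedge V^*$ by the exterior square of its action on $V^*$, we have
\begin{equation*}
(g\cdot a)\wedge (g\cdot b) = g\cdot(a\wedge b).
\end{equation*}
Because $g$ acts by a linear automorphism, $g\cdot(a\wedge b)\ne 0$; and by the previous paragraph, $g\cdot(a\wedge b)\in K^{\perp}$. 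Hence $b'=g\cdot b$ witnesses that $g\cdot a\in \RR(V,K)$, completing the proof.

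There is no real obstacle here: the only thing to be careful about is the naturality statement that the $G$-action on $V^*\wedge V^*$ really is the exterior square of the contragredient action, so that the identity $(g\cdot a)\wedge(g\cdot b)=g\cdot(a\wedge b)$ holds. Once this is noted, the argument is a one-line check from Definition \ref{def:res}.
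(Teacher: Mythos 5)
Your proof is correct and follows essentially the same route as the paper: choose a witness $b$ for a nonzero $a\in\RR(V,K)$, note that $K^{\perp}$ is $G$-invariant because $K$ is, and apply the diagonal action of $g$ to the relation $a\wedge b\in K^{\perp}$. You merely spell out the bookkeeping (the contragredient action and the invariance of the annihilator) that the paper leaves implicit.
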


\begin{proof}
Let $a\in V^*$ be a non-zero element of $\RR(V,K)$. 
By definition, this means there is an element $b\in V^*$ 
such that $a\wedge b\ne 0$ and $a\wedge b\in K^{\perp}$. 

Now let $g$ be an element of $G$. Since $G$ 
acts diagonally on $V^*\wedge V^*$, and since $K^{\perp}$ 
is a $G$-invariant subspace of $V^*\wedge V^*$, 
we have that $ga\wedge gb\ne 0$ and $ga\wedge gb\in K^{\perp}$. 
This shows that $ga\in \RR(V,K)$, and we are done. 
\end{proof} 

Next, we recall a lemma from \cite{DP}.

\begin{lemma}[\cite{DP}]
\label{lem:borel}
Let $G$ be a complex, semisimple, linear algebraic group, with 
Lie algebra $\g$.   Let $U$ be an irreducible, rational 
representation of $G$.  If $\RR$ is a Zariski-closed, $G$-invariant 
cone in $U$, and if $\RR \ne \{ 0\}$, then $\RR$ contains a 
maximal vector for the $\g$-module $U$. 
\end{lemma}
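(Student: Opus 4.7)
The plan is to reduce this to the Borel fixed point theorem. Since $\RR$ is a Zariski-closed cone and $\RR\ne\{0\}$, its projectivization $\PP(\RR)\subset \PP(U)$ is a non-empty, Zariski-closed subvariety. The $G$-action on $U$ descends to an action on $\PP(U)$, and the $G$-invariance of $\RR$ guarantees that $\PP(\RR)$ is preserved.

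The next step is to choose a Borel subgroup $B\subset G$ with Lie algebra $\h\oplus\g^{+}$, as supplied by the root-space decomposition fixed at the start of Section \ref{subsec:reps}. The Borel fixed point theorem applies to the action of the connected solvable group $B$ on the complete variety $\PP(\RR)$ and produces a fixed point $[v]\in \PP(\RR)$, for some non-zero $v\in U$. The line $\C v$ is then $B$-stable, so its stabilizer contains both the maximal torus $T\subset B$ (forcing $v$ to be a weight vector) and the unipotent radical of $B$ (whose Lie algebra is $\g^{+}$). Since $\g^{+}$ strictly raises weights, the only way $\g^{+}$ can act on the one-dimensional space $\C v$ is by zero. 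Hence $v$ is a maximal vector for the $\g$-module $U$, and $v\in \RR$ because $\RR$ is the affine cone over $\PP(\RR)$.

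Finally, by irreducibility of $U=V(\lambda)$ the space of maximal vectors is one-dimensional and spanned by $v_{\lambda}$, so $v$ is a non-zero scalar multiple of $v_{\lambda}$; again using that $\RR$ is a cone, we conclude $v_{\lambda}\in \RR$. There is no real obstacle here beyond the conceptual one: the argument is essentially a repackaging of the Borel fixed point theorem, and the only subtlety is the standard identification of $B$-fixed lines in $\PP(U)$ with lines spanned by maximal vectors, which follows from the description of the Lie algebra of the unipotent radical of $B$ as $\g^{+}$.
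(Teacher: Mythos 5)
Your argument is correct and is exactly the approach the paper (following \cite{DP}) indicates: apply the Borel fixed point theorem to the action of the Borel subgroup $B$ with Lie algebra $\h\oplus\g^{+}$ on the projective variety $\PP(\RR)\subseteq\PP(U)$, and observe that a $B$-fixed line is spanned by a maximal vector. You simply spell out the standard details (weight vector under the torus, $\g^{+}$ acting by zero on a stable line since it raises weights) that the paper leaves implicit.
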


The proof of this lemma is based on the classical Borel fixed 
point theorem (cf.~\cite{Hu75}), which insures that the action 
of the Borel subgroup $B$ with Lie algebra $\h \oplus \g^{+}$ 
has a fixed point in the projectivization $\PP(\RR) \subseteq \PP (U)$. 

Using the above two lemmas, and some standard facts about 
Lie groups and Lie algebras from Serre's monograph \cite{Se}, 
we can now pin down a maximal vector in each non-zero 
resonance variety associated to an irreducible $\g$-representation. 

\begin{prop}
\label{prop:inf borel}
Let $\g$ be a complex, semisimple Lie algebra, let $V$ 
be an irreducible $\g$-module, and let $K\subseteq V\wedge V$ 
be a $\g$-submodule. If the resonance variety $\RR=\RR(V,K)$ 
is not equal to $\{ 0\}$, then $\RR$ contains a maximal vector 
for the $\g$-module $V^*$. 
\end{prop}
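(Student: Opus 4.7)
The plan is to reduce Proposition~\ref{prop:inf borel} to Lemma~\ref{lem:borel} by integrating the Lie algebra action to an action of a suitable linear algebraic group, so that the resonance variety becomes a Zariski-closed, group-invariant cone in an irreducible rational representation.

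First I would choose $G$ to be the simply-connected complex Lie group with Lie algebra $\g$. Standard results (for instance from Serre's monograph referenced in the text) give that $G$ is a complex, semisimple linear algebraic group, and that every finite-dimensional $\g$-module lifts uniquely to a rational representation of $G$. Applying this to $V$, we obtain a rational $G$-module structure on $V$ compatible with the given $\g$-action; dualizing, $V^*$ inherits a rational $G$-module structure as well. Since $V$ is irreducible as a $\g$-module, both $V$ and $V^*$ are irreducible as $G$-modules (by connectedness of $G$, irreducibility for $G$ and for $\g$ coincide).

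Next I would verify that the hypotheses of Lemma~\ref{lem:borel} hold with $U=V^*$ and $\RR=\RR(V,K)$. The key point is that $K\subseteq V\wedge V$, being a $\g$-submodule, is in fact $G$-invariant (again by connectedness of $G$ and the equivalence of $\g$- and $G$-invariance), so its annihilator $K^{\perp}\subseteq V^*\wedge V^*$ is a $G$-invariant subspace of the diagonal $G$-representation on $V^*\wedge V^*$. By Lemma~\ref{lem:inv res} applied to the group $G$ and the representation $\rho\colon G\to\GL(V)$, the resonance variety $\RR(V,K)\subseteq V^*$ is $G$-invariant. Moreover, $\RR(V,K)$ is a Zariski-closed cone in $V^*$, as noted immediately after Definition~\ref{def:res}.

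With all hypotheses in place, and under the assumption that $\RR(V,K)\ne\{0\}$, Lemma~\ref{lem:borel} applies directly to yield a maximal vector for the $\g$-module $V^*$ lying in $\RR(V,K)$, which is exactly the desired conclusion. The only step requiring any care is the first one — ensuring that the abstract Lie-algebraic data $(\g,V,K)$ lifts to the algebraic-group setting needed by Lemma~\ref{lem:borel} — but this is standard and can be cited from Serre's monograph; everything else is a direct invocation of the lemmas already established.
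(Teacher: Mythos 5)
Your argument is correct and matches the paper's own proof essentially line for line: integrate the $\g$-action to the simply-connected algebraic group $G$ via Serre, deduce $G$-invariance of $K$ and hence of $\RR(V,K)$ by Lemma \ref{lem:inv res}, observe that $\RR$ is a Zariski-closed cone, and invoke Lemma \ref{lem:borel} with $U=V^*$. No gaps.
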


\begin{proof}
As shown in \cite{Se}, there is a unique simply-connected, complex, 
semisimple, linear algebraic group $G$ such that $\Lie(G)=\g$.
Furthermore, the vector space $V$ supports a 
rational, irreducible representation of $G$, such that the associated 
infinitesimal representation of $\Lie(G)$ coincides with the given 
$\g$-module structure on $V$.

It follows that the subspace $K\subseteq V\wedge V$ is $G$-invariant.  
Hence, by Lemma \ref{lem:inv res}, the set $\RR=\RR(V,K)$ is also 
$G$-invariant.   Of course, $\RR$ is a homogeneous variety, 
and thus a conical subset of $V^*$; furthermore, $\RR\ne \{0\}$, 
by assumption.   Thus, all the hypothesis of Lemma \ref{lem:borel} 
have been verified, and we conclude that $\RR$ contains a maximal 
vector for $V^*$.
\end{proof}

\subsection{Decomposing a maximal vector}
\label{subsec:decomp}

The following lemma generalizes Lemmas 3.5 from \cite{DP} 
and  9.6 from \cite{PS-johnson}. Although the proof 
follows in rough outline the proofs of those two previous results, 
we include here full details, for the reader's convenience. 

\begin{lemma}
\label{lem:engel}
Let $V=V(\lambda)$ be an irreducible $\g$-module, and 
let $v_{\lambda^*}$ be a maximal  vector for $V^*$.   
Let $K$ be a  $\g$-invariant 
subspace of $V\wedge V$. 
Suppose $v_{\lambda^*}$ belongs to $\RR (V,K)$.   
Then, we may  find an element $w\in V^*$ such that 
if we set $u=v_{\lambda^*} \wedge w$, we have 
$u\in K^{\perp}$, $u\ne 0$, and $\g^+\cdot u=0$. 
\end{lemma}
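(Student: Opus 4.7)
The plan is to exhibit a Borel-stable subspace of $K^\perp$ whose nonzero elements automatically have the form $v_{\lambda^*}\wedge w$, and then extract the desired $u$ via Engel's theorem applied to the nilpotent Lie algebra $\g^+$.

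Concretely, I would introduce
\[
U_0 := K^\perp \cap \bigl(v_{\lambda^*}\wedge V^*\bigr)\subseteq V^*\wedge V^*,
\]
so that, since $\dim V^*\ge 2$, every nonzero $u\in U_0$ is automatically of the form $v_{\lambda^*}\wedge w$ for some $w\in V^*$. The hypothesis $v_{\lambda^*}\in \RR(V,K)$ furnishes an element $b\in V^*$ with $v_{\lambda^*}\wedge b\ne 0$ and $v_{\lambda^*}\wedge b\in K^\perp$, which forces $U_0\ne 0$.

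Next I would verify that $U_0$ is stable under the Borel subalgebra $\h\oplus \g^+$. The essential observation is that the subspace $v_{\lambda^*}\wedge V^*$ is itself $(\h\oplus\g^+)$-invariant: for $x\in\g^+$, the derivation property of the $\g$-action on $V^*\wedge V^*$ together with $x\cdot v_{\lambda^*}=0$ gives $x\cdot(v_{\lambda^*}\wedge w) = v_{\lambda^*}\wedge (x\cdot w)$, while for $h\in\h$ one has $h\cdot(v_{\lambda^*}\wedge w) = \lambda^*(h)\, v_{\lambda^*}\wedge w + v_{\lambda^*}\wedge (h\cdot w)$, both of which still lie in $v_{\lambda^*}\wedge V^*$. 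Since $K^\perp$ is $\g$-invariant (because $K$ is), intersecting the two $(\h\oplus\g^+)$-invariant subspaces yields $(\h\oplus\g^+)\cdot U_0\subseteq U_0$.

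Finally, each $x\in\g^+$ acts as a nilpotent endomorphism on the finite-dimensional $\g$-module $V^*\wedge V^*$ (since it shifts weights strictly in the positive direction), hence on the invariant subspace $U_0$. Applying Engel's theorem to the nilpotent Lie algebra $\g^+$ acting on the nonzero space $U_0$ produces a nonzero $u\in U_0$ annihilated by all of $\g^+$; writing $u = v_{\lambda^*}\wedge w$ supplies the $w$ required by the statement. The only substantive point is the invariance check for $v_{\lambda^*}\wedge V^*$, which rests crucially on $v_{\lambda^*}$ being a highest-weight vector; once that is in place, the conclusion falls out of Engel's theorem in a routine fashion.
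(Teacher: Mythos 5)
Your argument is correct and is essentially the paper's proof: the space $U_0 = K^\perp\cap(v_{\lambda^*}\wedge V^*)$ is exactly the intersection $\im(\ell)\cap K^\perp$ the paper works with (where $\ell$ is left-multiplication by $v_{\lambda^*}$), the nonvanishing and $\g^+$-stability are established the same way via $\g^+\cdot v_{\lambda^*}=0$, and the conclusion follows from Engel's theorem once each element of $\g^+$ is seen to act nilpotently (your weight-shifting observation is the same mechanism as the paper's height estimate).
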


\begin{proof}
Let $\ell\colon V^*\to V^* \wedge V^*$ denote left-multiplication 
by the element $v_{\lambda^*}$ in the exterior algebra on $V^*$. 
By definition of resonance, the vector $v_{\lambda^*}$ belongs to 
$\RR(V,K)$ if and only if $\im (\ell) \cap K^{\perp} \ne 0$. 
The Lie algebra $\g^{+}$ annihilates the vector $v_{\lambda^*}$; 
hence, the linear map $\ell$ is $\g^{+}$-equivariant. 
Thus, all we need to show is that the $\g^{+}$-module 
$\im (\ell) \cap K^{\perp} $ contains a non-zero vector 
$u$ annihilated by $\g^{+}$.

This claim follows from Engel's theorem (cf.~\cite[Theorem 3.3]{Hu72}), 
provided each element of $\g^{+}$ acts nilpotently on $K^{\perp}$.
To verify this nilpotence property, we may assume $K^{\perp}$ 
is $\g$-irreducible, the general case following easily from this one. 

The Lie algebra $\g^+$ decomposes into a direct sum 
of $1$-dimensional vector spaces, each one spanned by an element 
$x_{\alpha}$, with $\alpha$ running through the set of positive roots, 
$\Phi^{+}$.   
Let $\mu$ be the dominant weight corresponding to $K^{\perp}$. 
Let $K^{\perp}_{\nu}$ be a non-trivial weight space for $K^{\perp}$, 
and let $\alpha_1,\dots ,\alpha_r\in \Phi^{+}$.  
Then 
\begin{equation}
\label{eq:kpnew}
x_{\alpha_1}\cdots x_{\alpha_r}  (K^{\perp}_{\nu}) \subseteq  K^{\perp}_{\nu'}, 
\end{equation}
where $\nu'=\nu + \sum_{i=1}^r \alpha_i$.
Suppose $K^{\perp}_{\nu'} \ne 0$; 
then, by maximality of $\mu$, we may write $\nu'=\mu-\beta$, 
for some element $\beta$ in the positive cone, see \cite[Theorem 20.2(b)]{Hu72}. 
Hence, $\sum_{i=1}^r \alpha_i = \mu- \nu -\beta$.  Since $\alpha_i \in \Phi^+$
and $\beta \in C^+$, we find that
\begin{equation}
\label{eq:height}
r\le \height \Big(\sum_{i=1}^r \alpha_i \Big) \le \height(\mu-\nu).
\end{equation}

Choosing an integer $r$ strictly greater than $\height(\mu-\nu)$, 
for all $\nu \in \WW (K^{\perp})$, formula \eqref{eq:height} guarantees 
that $K^{\perp}_{\nu'}=0$.  From \eqref{eq:kpnew}, it follows 
that each element of $\g^{+}$ acts nilpotently on $K^{\perp}$.  
This completes the proof.
\end{proof}

\subsection{A vanishing criterion}
\label{subsec:simple roots}

We now provide a test insuring that the resonance variety 
associated to a $\g$-invariant subspace $K\subset V\wedge V$ 
does vanish. 

\begin{theorem}
\label{thm:root vanish}
Let $V$ be an irreducible $\g$-module  
of highest weight $\lambda$, and let $K$ be a $\g$-submodule of 
$V\wedge V$. Suppose that $2\lambda^* -\mu\notin \Delta$, for 
any $\mu\in \VV(K^{\perp})$.  Then $\RR(V,K)=\{0\}$.  
\end{theorem}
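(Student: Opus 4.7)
The plan is to prove the contrapositive: assuming $\RR(V,K) \ne \{0\}$, I will produce a dominant weight $\mu \in \VV(K^{\perp})$ with $2\lambda^* - \mu \in \Delta$. Proposition \ref{prop:inf borel} places the maximal vector $v_{\lambda^*}$ of $V^*$ inside $\RR(V,K)$, and then Lemma \ref{lem:engel} supplies a vector $w \in V^*$ such that $u := v_{\lambda^*} \wedge w$ is a non-zero element of $K^{\perp}$ annihilated by $\g^+$. Since $K^{\perp}$ is $\h$-stable, decomposing $u$ into its weight components yields a non-zero component still annihilated by $\g^+$; replacing $u$ by that component, I may assume $u$ has a definite weight $\mu$, so that the $\g$-submodule $\g \cdot u$ is a copy of $V(\mu)$ inside $K^{\perp}$ and $\mu \in \VV(K^{\perp})$. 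A parallel decomposition of $w$, combined with the observation that $v_{\lambda^*} \wedge w_\nu = 0$ for $\nu \ne \mu - \lambda^*$, further lets me take $w$ to be a weight vector of weight $\mu - \lambda^*$.

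The next step exploits the identity $x_\alpha u = v_{\lambda^*} \wedge x_\alpha w$, valid because $x_\alpha v_{\lambda^*} = 0$ for each $\alpha \in \Phi^+$. Since the left-hand side vanishes, $x_\alpha w$ must lie in $\C v_{\lambda^*}$ for every $\alpha \in \Phi^+$. But $x_\alpha w$ has weight $\mu - \lambda^* + \alpha$, and the $\lambda^*$-weight space of $V^*$ is one-dimensional, so $x_\alpha w$ can be a non-zero multiple of $v_{\lambda^*}$ only when $\alpha = 2\lambda^* - \mu$; for every other $\alpha \in \Phi^+$, $x_\alpha w = 0$. If $x_\alpha w$ vanished for every $\alpha \in \Phi^+$, then $w$ would itself be a maximal vector of the irreducible module $V^*$, forcing $w \in \C v_{\lambda^*}$ and hence $u = 0$, a contradiction. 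Thus $\alpha_0 := 2\lambda^* - \mu$ lies in $\Phi^+$ and satisfies $x_{\alpha_0} w \ne 0$.

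The main obstacle is promoting $\alpha_0$ from an arbitrary positive root to a simple one. I would argue by contradiction: if $\alpha_0$ had height at least two, the standard structure theory of root systems (cf.~\cite{Hu72}) would let me write $\alpha_0 = \delta + \beta$ with $\delta \in \Phi^+$ and $\beta \in \Delta$, together with $[x_\beta, x_\delta] = N_{\beta,\delta}\, x_{\alpha_0}$ for some non-zero constant $N_{\beta,\delta}$. Since $\beta, \delta \ne \alpha_0$, the previous step yields $x_\beta w = x_\delta w = 0$, and therefore
\[
N_{\beta,\delta}\, x_{\alpha_0} w \;=\; [x_\beta, x_\delta]\, w \;=\; x_\beta(x_\delta w) - x_\delta(x_\beta w) \;=\; 0,
\]
contradicting $x_{\alpha_0} w \ne 0$. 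Hence $\alpha_0 \in \Delta$, providing the desired $\mu \in \VV(K^{\perp})$ with $2\lambda^* - \mu \in \Delta$ and completing the contrapositive.
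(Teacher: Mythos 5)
Your proof is correct and follows the paper's argument almost step for step: Proposition \ref{prop:inf borel} to place the maximal vector $v_{\lambda^*}$ in $\RR(V,K)$, Lemma \ref{lem:engel} to produce $u=v_{\lambda^*}\wedge w$ with $u\ne 0$, $u\in K^{\perp}$, $\g^+\cdot u=0$, the weight decomposition to reduce to the case where $w$ is a weight vector of weight $\mu-\lambda^*$ with $\mu\in\VV(K^{\perp})$, and the identity $x_\alpha u=v_{\lambda^*}\wedge x_\alpha w$ to force $x_\alpha w\in\C\, v_{\lambda^*}$. The only divergence is the endgame. The paper tests only the simple roots: since the $x_\beta$, $\beta\in\Delta$, generate $\g^+$, either $x_\beta w\ne 0$ for some $\beta\in\Delta$, which by weight comparison gives $2\lambda^*-\mu=\beta\in\Delta$ and contradicts the hypothesis, or else $w$ is a maximal vector of $V^*$ and $u=0$. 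You instead test all of $\Phi^+$, isolate the unique positive root $\alpha_0=2\lambda^*-\mu$ with $x_{\alpha_0}w\ne 0$, and then prove $\alpha_0$ is simple by writing $\alpha_0=\beta+\delta$ and using $[x_\beta,x_\delta]=N_{\beta,\delta}\,x_{\alpha_0}$ with $N_{\beta,\delta}\ne 0$. That commutator step is valid but redundant; restricting to $\Delta$ from the outset yields the same conclusion without invoking structure constants. One small imprecision worth correcting: it is not true in general that $v_{\lambda^*}\wedge w_\nu=0$ for $\nu\ne\mu-\lambda^*$; what you actually need (and what is true) is that the weight-$\mu$ component of $u$ equals $v_{\lambda^*}\wedge w_{\mu-\lambda^*}$, so replacing $u$ by that component and $w$ by $w_{\mu-\lambda^*}$ is legitimate.
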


\begin{proof}
Suppose $\RR=\RR(V,K)$ is non-zero.  Then, by 
Proposition \ref{prop:inf borel}, the variety $\RR\subset V^*$ 
contains a maximal vector $v_{\lambda^*}$ for the 
irreducible $\g$-module $V^*=V(\lambda^{*})$.  We 
will use this fact, together with our assumption 
on the weight $\lambda^*$, to derive a contradiction.

Lemma \ref{lem:engel} yields an element $w\in V^*$ such that, if we set 
$u=v_{\lambda^*} \wedge w$, then
\begin{equation}
\label{eq:uk}
u\in K^{\perp}, \quad u\ne 0, \quad \g^+\cdot u=0.
\end{equation}

Let $V^*=\bigoplus_{\nu\in \WW(V^*)} V^*_{\nu}$ be the weight decomposition 
of $V^*$ under the action of $\h$; accordingly, write $w=\sum_{\nu} w_{\nu}$, 
where each $w_{\nu}$ belongs to $V^*_{\nu}$, and thus 
has weight $\nu$.  Finally, set 
\begin{equation}
\label{eq:uknu}
u_{\lambda^*+\nu}=v_{\lambda^*} \wedge w_{\nu}.
\end{equation}

Note that $u_{\lambda^*+\nu}$ has weight $\lambda^*+\nu$. 
Hence, $u=\sum_{\nu} u_{\lambda^*+\nu}$ is the weight decomposition 
of the vector $u\in K^{\perp}$.  Since $K^{\perp}$ is a $\g$-submodule 
of $V^*\wedge V^*$, it follows that each $u_{\lambda^* + \nu}$ belongs to 
$K^{\perp}$.  

Now, for each $\alpha\in \Phi^+$, we have that 
$x_{\alpha} u=0$; hence, $x_{\alpha} u_{\lambda^*+\nu}=0$ 
for all $\nu$, by uniqueness of the weight decomposition. 
Finally, since $u\ne 0$, one of the components of $u$ must 
be non-zero.  Therefore, there is an index $\nu$ such that 
\begin{equation}
\label{eq:uknu bis}
u_{\lambda^*+\nu}\in K^{\perp}, \quad u_{\lambda^*+\nu}\ne 0, 
\quad \g^+\cdot u_{\lambda^*+\nu}=0.
\end{equation}

Set $\mu:=\lambda^*+\nu$.  By \eqref{eq:uknu bis}, then, 
\begin{equation}
\label{eq:mln}
\mu\in \VV(K^{\perp}).
\end{equation}

For any simple root $\alpha\in \Delta$, we have that 
\begin{equation}
\label{eq:xu}
0=x_{\alpha} u_{\lambda^*+\nu} = 
x_{\alpha} ( v_{\lambda^*} \wedge w_{\nu} ) = 
v_{\lambda^*}\wedge x_{\alpha} w_{\nu},
\end{equation}
and thus $x_{\alpha} w_{\nu} \in \C\cdot v_{\lambda^*}$. 
Suppose there is a simple root $\beta$ such that 
$x_{\beta} w_{\nu}\ne 0$.  A simple weight inspection 
reveals that 
\begin{equation}
\label{eq:nal}
\beta+\nu=\lambda^*.
\end{equation}
Putting together \eqref{eq:mln} and \eqref{eq:nal}, we obtain 
that $2\lambda^*-\mu=\beta\in \Delta$, with $\mu\in \VV(K^{\perp})$, 
thereby contradicting our hypothesis. Therefore, $x_{\alpha} w_{\nu}= 0$, 
for all $\alpha\in \Delta$, i.e., $\g^{+}\cdot w_{\nu}=0$.  

On the other hand, we also know that $w_{\nu}\ne 0$; thus, 
$w_{\nu}$ is a maximal vector in $V^*=V(\lambda^*)$.  
Hence, $w_{\nu}\in \C\cdot v_{\lambda^*}$. 
Therefore, $u_{\lambda^*+\nu}=v_{\lambda^*} \wedge w_{\nu}=0$, 
contradicting \eqref{eq:uknu bis}. Thus, $\RR=\{0\}$.
\end{proof}

\begin{corollary}
\label{cor:mult free}
Let $V$ be an irreducible $\g$-module  
of highest weight $\lambda$,  and set $V^*=V(\lambda^*)$. 
Let $K$ be a $\g$-submodule of $V\wedge V$.   
Then, the following conditions are equivalent:
\begin{enumerate}
\item \label{mf1}
$\RR(V,K)=\{0\}$.
\item  \label{mf2}
$\dim_{\C} \Wey(V, K) < \infty$.
\item  \label{mf3}
$2\lambda^* - \beta \notin \VV(K^{\perp})$, for all $\beta\in \Delta$ 
such that $(\beta,\lambda^*)\ne 0$.
\end{enumerate}
\end{corollary}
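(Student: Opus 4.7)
The plan is to string together the three equivalences by combining the general support-resonance dictionary with the Lie-theoretic material already developed. The equivalence \eqref{mf1} $\Leftrightarrow$ \eqref{mf2} is essentially free: it is Lemma~\ref{lem:supp}\eqref{rr2} applied to the pair $(V,K)$, which tells us that the Koszul module is finite-dimensional over $\C$ exactly when the resonance variety is reduced to the origin.

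For \eqref{mf3} $\Rightarrow$ \eqref{mf1}, I would show that condition~\eqref{mf3} is in fact equivalent to the hypothesis of Theorem~\ref{thm:root vanish}. Rewriting that hypothesis by setting $\mu=2\lambda^*-\beta$, it becomes: for every simple root $\beta\in\Delta$, the weight $2\lambda^*-\beta$ does not lie in $\VV(K^\perp)$. Compared with~\eqref{mf3}, the only missing case is that of simple roots $\beta$ with $(\lambda^*,\beta)=0$. But Lemma~\ref{lem:weights} (applied to the irreducible $\g$-module $V^*=V(\lambda^*)$, whose second exterior power contains $K^\perp$) says that $V(2\lambda^*-\beta)$ fails to be a summand of $V^*\wedge V^*$ whenever $(\lambda^*,\beta)=0$, so \emph{a fortiori} it cannot be a summand of $K^\perp$. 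Thus~\eqref{mf3} already entails the stronger statement, and Theorem~\ref{thm:root vanish} gives $\RR(V,K)=\{0\}$.

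For the converse \eqref{mf1} $\Rightarrow$ \eqref{mf3}, I would argue by contrapositive: assume there is some $\beta\in\Delta$ with $(\lambda^*,\beta)\neq 0$ and $2\lambda^*-\beta\in\VV(K^\perp)$. By Lemma~\ref{lem:weights}, the summand of type $V(2\lambda^*-\beta)$ inside $V^*\wedge V^*$ is \emph{unique}, and Lemma~\ref{lem:qqq2} identifies this summand explicitly with $K(\lambda^*,\beta)$, the $\g$-invariant subspace generated by the maximal vector $v_{\lambda^*}\wedge y_\beta v_{\lambda^*}$. Since $K^\perp$ contains a copy of this unique irreducible summand, we necessarily have $K(\lambda^*,\beta)\subseteq K^\perp$. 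Proposition~\ref{prop:non-vanish} then produces $v_{\lambda^*}\in\RR(V,K)$, violating~\eqref{mf1}.

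There is no real obstacle here: the work has already been done in Theorem~\ref{thm:root vanish}, Proposition~\ref{prop:non-vanish}, and Lemmas~\ref{lem:weights} and \ref{lem:qqq2}. The only point requiring a little care is the reduction from~\eqref{mf3} to the hypothesis of Theorem~\ref{thm:root vanish}, and this is exactly where Lemma~\ref{lem:weights} plays its decisive role by ensuring that the ``irrelevant'' simple roots (those orthogonal to $\lambda^*$) contribute no potential summand in $V^*\wedge V^*$, and hence none in $K^\perp$ either.
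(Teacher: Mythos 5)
Your proposal is correct and follows essentially the same route as the paper: the equivalence of \eqref{mf1} and \eqref{mf2} via Lemma~\ref{lem:supp}, the implication \eqref{mf3}~$\Rightarrow$~\eqref{mf1} via Theorem~\ref{thm:root vanish} together with Lemma~\ref{lem:weights} (to dispose of the simple roots orthogonal to $\lambda^*$), and the converse via the uniqueness of the $V(2\lambda^*-\beta)$ summand, Lemma~\ref{lem:qqq2}, and Proposition~\ref{prop:non-vanish}. Your write-up merely spells out in more detail the reduction to the hypothesis of Theorem~\ref{thm:root vanish}, which the paper leaves implicit.
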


\begin{proof}
The equivalence \eqref{mf1} $\Leftrightarrow$ \eqref{mf2} follows 
from Lemma \ref{lem:supp}. The implication \eqref{mf3} 
$\Rightarrow$ \eqref{mf1} follows from 
Theorem \ref{thm:root vanish} and Lemma \ref{lem:weights}.

For the implication \eqref{mf1} $\Rightarrow$ \eqref{mf3},
set $\mu:= 2\lambda^* - \beta$, and suppose 
$\mu \in \VV(K^{\perp})$, for some $\beta\in \Delta$ 
such that $(\beta,\lambda^*)\ne 0$.  
In this case, there is a submodule $W\subseteq K^{\perp}$
isomorphic to $V(\mu)$. On the other hand, the submodule 
$K(\lambda^*, \beta) \subseteq V^*\wedge V^*$ is also 
isomorphic to $V(\mu)$, by Lemma \ref{lem:qqq2}. 
By the uniqueness property from \cite[Ch.~VIII, \S 7, Exercice~17]{Bo},
$K(\lambda^*, \beta)=W$.  
Hence, by Proposition \ref{prop:non-vanish}, 
$v_{\lambda^*}$ is a non-zero vector in $\RR(V,K)$,
and we are done.
\end{proof}

\section{Weyman modules}
\label{sect:weyman}

In this section, we treat in detail the case when $\g=\sl_2(\C)$.  
The representation theory of this simple Lie algebra is 
of course classical; for a thorough treatment, we refer 
to Fulton and Harris \cite{FH}.

The dual Cartan subalgebra, $\h^*$, is spanned 
by functionals $t_1$ and $t_2$ (the dual coordinates on the 
subspace of diagonal $2\times 2$ complex matrices), 
subject to the single relation $t_1+t_2=0$. 
The set $\Phi^{+}=\Delta$ consists of a single simple root, 
$\beta=t_1-t_2$.  The defining ($2$-dimensional) representation 
is $V(\lambda_1)$, where $\lambda_1=t_1$. Moreover, all the 
finite-dimensional, irreducible representations of $\sl_2(\C)$ 
are of the form 
\begin{equation}
\label{eq:vn}
V_n=V(n\lambda_1)=\Sym_n(V(\lambda_1)),
\end{equation} 
for some $n\ge 0$. Clearly, $\dim V_n =n+1$.  
Furthermore, $V_n^*=V_n$; in other words, all irreps of 
$\sl_2(\C)$ are self-dual.

For each $n\ge 1$, the second exterior power of $V_n$ 
decomposes into irreducibles, according to the Clebsch-Gordan rule:
\begin{equation}
\label{eq:sl2 wedge}
V_n \wedge V_n = \bigoplus_{j\ge 0} V_{2n-2-4j}.
\end{equation}
In particular, all summands in \eqref{eq:sl2 wedge} occur 
with multiplicity $1$, and $V_{2n-2}$ is always one of those 
direct summands. 

\begin{prop}
\label{prop:findim}
Let $K$ be an $\sl_2(\C)$-submodule 
of $V_n \wedge V_n$.  The following are equivalent:
\begin{enumerate}
\item The variety $\RR(V_n,K)$ consists only of $0\in V_n^*$. 
\item The $\C$-vector space $\Wey(V_n, K)$ is finite-dimensional. 
\item The representation $K$ contains $V_{2n-2}$ as a direct summand.
\end{enumerate}
\end{prop}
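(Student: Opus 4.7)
The plan is to reduce the proposition to Corollary~\ref{cor:mult free} and then exploit the multiplicity-freeness of the decomposition \eqref{eq:sl2 wedge}. The equivalence $(1)\Leftrightarrow(2)$ is immediate from Lemma~\ref{lem:supp}\eqref{rr2}, so the real work is to establish $(1)\Leftrightarrow(3)$.

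First I would specialize Corollary~\ref{cor:mult free} to $\g=\sl_2(\C)$ and $V=V_n$. Self-duality of $V_n$ gives $\lambda^*=\lambda=n\lambda_1$; the sole simple root is $\beta=2\lambda_1$, and $(\lambda^*,\beta)=2n(\lambda_1,\lambda_1)\ne 0$ for every $n\ge 1$. The relevant weight is $2\lambda^*-\beta=(2n-2)\lambda_1$, which is precisely the highest weight of $V_{2n-2}$. The Corollary thus identifies $(1)$ with the statement that $V_{2n-2}$ does not appear as an irreducible summand of $K^{\perp}$.

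Second, I would translate this into a condition on $K$. By the Clebsch--Gordan rule \eqref{eq:sl2 wedge}, $V_n\wedge V_n$ decomposes without multiplicity, and self-duality identifies $V_n^*\wedge V_n^*$ with the same module. The canonical pairing between them is $\sl_2(\C)$-equivariant and non-degenerate, so by Schur's lemma distinct isotypic components pair to zero. Consequently, if $K=\bigoplus_{j\in J}V_{2n-2-4j}$, then $K^{\perp}=\bigoplus_{j\notin J}V_{2n-2-4j}$. Hence $V_{2n-2}\not\subseteq K^{\perp}$ if and only if $V_{2n-2}\subseteq K$, which is exactly $(3)$.

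There is essentially no real obstacle here: the multiplicity-free decomposition reduces everything to a bookkeeping of summands, and Corollary~\ref{cor:mult free} supplies the rest. The one step requiring a moment's care is the Schur-orthogonality identification of $K^{\perp}$ with the complementary sum of summands, but once multiplicity-freeness is in hand this is routine. (Alternatively, one can prove $(3)\Rightarrow(1)$ directly from the vanishing criterion of Theorem~\ref{thm:root vanish} and the converse $(1)\Rightarrow(3)$ from Proposition~\ref{prop:non-vanish}, which is essentially how Corollary~\ref{cor:mult free} is assembled.)
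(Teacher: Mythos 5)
Your proof is correct and follows essentially the same route as the paper: the paper's proof is exactly the specialization of Corollary~\ref{cor:mult free} to $\sl_2(\C)$, computing $\lambda^*=n\lambda_1$, $\beta=t_1-t_2=2\lambda_1$, and $2\lambda^*-\beta=(2n-2)\lambda_1$. Your explicit Schur-orthogonality identification of $K^{\perp}$ with the complementary sum of summands is a detail the paper leaves implicit ("the desired conclusion follows at once"), but it is the same argument.
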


\begin{proof}
Let us apply Corollary \ref{cor:mult free} to this situation. 
We have $\lambda^*=\lambda= n t_1$ and $\beta=t_1-t_2$;  
thus, $(\lambda^*, \beta)\ne 0$ and 
$2\lambda^*-\beta = (2n-2) \lambda_1$.  
The desired conclusion follows at once.
\end{proof}

Following Eisenbud \cite{Ei}, let us single out an important 
particular case of the above construction. 

\begin{definition}
\label{def:wm}
For each $n\ge 1$, the corresponding {\em Weyman module}\/ is 
$W(n)=\Wey(V_n$, $V_{2n-2})$, 
viewed as a graded module over the polynomial ring $\Sym(V_n)$.
\end{definition}

Note that $V_{2n-2}$ belongs to the set $U_{n+1,2(n+1)-3}$ 
from \eqref{eq:unm}, and thus, it is in the critical range identified 
in Proposition \ref{prop:genvanish}.

The first part of the next corollary recovers an assertion from \cite{Ei}.

\begin{corollary}
\label{cor:fw}
For each $n\ge 1$, the following hold. 
\begin{enumerate}
\item \label{w1}
The Weyman module $W(n)=\Wey(V_n, V_{2n-2})$ has finite 
dimension as a $\C$-vector space.
\item \label{w2}
Given an $\sl_2(\C)$-submodule $K\subset V_n\wedge V_n$, the 
corresponding Koszul module $\Wey(V_n$, $K)$ is finite-dimensional 
over $\C$ if and only if $\Wey(V_n,K)$ is a quotient of $W(n)$.
\end{enumerate}
\end{corollary}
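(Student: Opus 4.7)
The plan is to derive both parts directly from Proposition \ref{prop:findim} together with a monotonicity property of Koszul modules with respect to the relations subspace $K$.

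For part \eqref{w1}, I would simply apply Proposition \ref{prop:findim} to the submodule $K=V_{2n-2}\subseteq V_n\wedge V_n$, which trivially contains itself as a direct summand. By the equivalence of conditions (1)--(3) there, this forces $\dim_{\C} W(n)<\infty$.

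For part \eqref{w2}, the key preliminary observation is a functoriality statement with respect to enlarging $K$: if $K_1 \subseteq K_2$ are two subspaces of $V\wedge V$, then there is a natural surjection $\Wey(V,K_1) \twoheadrightarrow \Wey(V,K_2)$. This is straightforward from the presentation \eqref{eq:kmod}, since
\begin{equation*}
\im(\delta_3 + \id\otimes \iota_{K_i}) = \im(\delta_3) + S\cdot K_i,
\end{equation*}
so the image for $K_2$ contains the image for $K_1$, and quotienting by the larger subspace yields a quotient module. (This is just the specialization of diagram \eqref{eq:gen weyman} to $\varphi=\id_{V_n}$.)

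With this in hand, both directions of the equivalence are short. For the ``if'' direction, if $\Wey(V_n,K)$ is a quotient of $W(n)$, then finite-dimensionality descends to it from part \eqref{w1}. For the ``only if'' direction, assume $\dim_{\C}\Wey(V_n,K)<\infty$. Proposition \ref{prop:findim} forces $V_{2n-2}$ to appear as a direct summand of $K$, and in particular as a submodule: $V_{2n-2}\subseteq K\subseteq V_n\wedge V_n$. Applying the monotonicity observation to this inclusion produces a surjection $W(n) = \Wey(V_n,V_{2n-2}) \twoheadrightarrow \Wey(V_n,K)$, finishing the proof.

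There is no serious obstacle here, since the heavy lifting — the equivalence of vanishing resonance with finiteness of the Koszul module and with $V_{2n-2}\subseteq K$ — has already been done in Proposition \ref{prop:findim}; the one point to verify carefully is simply that enlarging $K$ produces a quotient (not a submodule) on the Koszul-module side, i.e., that the direction of the induced map is as claimed.
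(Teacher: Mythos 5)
Your proposal is correct and is precisely the argument the paper intends (the corollary is stated there without an explicit proof, but the ingredients are Proposition \ref{prop:findim} and the surjectivity of $\Wey(\varphi)$ from diagram \eqref{eq:gen weyman} specialized to $\varphi=\id$ and an inclusion $K_1\subseteq K_2$). The only point worth making explicit is that in the ``only if'' direction the copy of $V_{2n-2}$ inside $K$ coincides with the distinguished summand $V_{2n-2}\subset V_n\wedge V_n$ defining $W(n)$, which holds because the decomposition \eqref{eq:sl2 wedge} is multiplicity-free.
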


As explained by Eisenbud in \cite{Ei}, Weyman showed that 
the vanishing of $W_{n-2}(n)$, for all $n\ge 1$,  
implies the generic Green Conjecture on free resolutions 
of canonical curves.

\section{Alexander invariants and resonance varieties of groups}
\label{sect:alexinv}

\subsection{The Alexander invariant of a group}
\label{subsec:alex inv}

Let $G$ be a group, and let $(x,y)=xyx^{-1}y^{-1}$ denote the 
group commutator. The derived subgroup, $G'=(G,G)$, is a 
normal subgroup; the quotient group, $G_{\ab}=G/G'$, is the  
maximal abelian quotient of $G$.  Also let $G''=(G',G')$ 
be the second derived subgroup; then $G/G''$ is the maximal 
metabelian quotient of $G$.  

The abelianization map, $\ab\colon G\surj G_{\ab}$, factors 
through $G/G''$,  yielding an exact sequence, 
$0 \to G'/G'' \to G/G'' \to G_{\ab}\to 0$. 
Conjugation in $G/G''$ naturally makes the abelian group 
$G'/G''$ into a module over the group ring $\Z[G_{\ab}]$.   
Following W.~Massey \cite{Ma}, we call the complexification 
of this module, 
\begin{equation} 
\label{eq:alex inv}
B(G) := (G'/G'') \otimes \C= H_1(G',\C),
\end{equation}
the {\em Alexander invariant}\/ of $G$. 
By construction, $B(G)$ is a module over the group algebra 
$R=\C[G_{\ab}]$, with module structure given by 
$\bar{h}\cdot \bar{g}= \overline{hgh^{-1}}$, for elements 
$\bar{h}\in G/G'$, represented by $h\in G$, and 
$\bar{g}\in G'/G''$, represented by $g\in G'$.

As a slight variation on this construction, let 
$\abf\colon G\to G_{\abf}$ be the projection 
to the maximal torsion-free 
abelian quotient, $G_{\abf}=G_{\ab}/\Tors(G_{\ab})$,   
and define the {\em reduced Alexander invariant}\/ to be 
the vector space 
\begin{equation} 
\label{eq:free alex inv}
\BB(G)=H_1(\ker(\abf),\C), 
\end{equation}
viewed as a module over the group algebra 
$\widetilde{R}=\C[G_{\abf}]$.   We then have an epimorphism  
$B(G) \to \BB(G)$, equivariant with respect to the canonical 
projection $R\to \widetilde{R}$. 

\subsection{Infinitesimal Alexander invariant}
\label{subsec:bbis}

For the rest of this section, we will assume $G$ is a finitely 
generated group.  Set $V=H_1(G,\C)$, and identify the 
dual vector space, $V^*$, with $H^1(G,\C)$.   
Let $\cup_G\colon V^*\wedge V^*\to H^2(G,\C)$ be 
the cup-product map.  Its kernel, $K^{\perp}\subset V^*\wedge V^*$, 
is the orthogonal complement to a linear subspace 
$K\subset V\wedge V$; put another way, $K=\im (\partial_G)$, 
where $\partial_G\colon H_2 (G, \C) \to V\wedge V$
is the transpose of $\cup_G$.

Let us define the {\em infinitesimal Alexander invariant}\/ of $G$ to  
be the $S$-module 
\begin{equation} 
\label{eq:inf alex}
\B(G)=\Wey(V, K),
\end{equation} 
where $S=\Sym(V)$. It is readily seen that this definition agrees 
with the one from \cite{PS-imrn}, modulo a degree shift by $2$. 

Now let $I$ be the augmentation ideal of the ring $R=\C[G_{\ab}]$. 
The powers of $I$ define a filtration on the Alexander invariant of $G$;
the associated graded object, 
\begin{equation}
\label{eq:hatbg}
\gr B(G)= \bigoplus_{q\ge 0} I^q B(G)/I^{q+1} B(G), 
\end{equation}
may be viewed as a graded module over the ring $\gr R = S$.   
Similar considerations apply to the 
$S$-module $\gr \widetilde{B}(G)$.

Work from \cite{Ma, PS-imrn, PS-johnson} 
as well as \cite[Proposition 2.4]{DHP}, implies that
\begin{equation}
\label{prop:grb}
\dim_{\C} \gr_q B(G) = \dim_{\C} \gr_q \BB (G) 
\le \dim_{\C} \B_q(G), 
\end{equation}
for all $q\ge 0$.

\subsection{The formal situation}
\label{subsec:formal}
Now suppose the (finitely generated) group $G$ is {\em $1$-formal}, 
i.e., its Malcev Lie algebra $\m(G)$ is the completion of a quadratic 
Lie algebra (we refer to \cite{PS-imrn} for more details on this notion).  
Then, it follows from \cite{DPS-duke} that 
\begin{equation}
\label{eq:bbcompl}
\gr B(G) \cong \B(G),
\end{equation}
as graded vector spaces.

Recall that an $R$-module $M$ is said to be nilpotent if 
$I^q\cdot M=0$, for some $q\ge 1$. Clearly, if $M$ is 
nilpotent, then $M=\gr M$, as vector spaces.
The following corollary is now immediate.

\begin{corollary}
\label{cor:formal}
Suppose $G$ is a $1$-formal group. Then:
\begin{enumerate}
\item \label{f1}
$\gr B(G) =  \gr \BB (G) =  \B(G)$, as graded 
vector spaces.  
\item \label{f2}
If $B(G)$ is nilpotent, 
then $\dim_{\C} B(G)= \dim_{\C} \B(G)<\infty$.
\item \label{f3}
If $\BB(G)$ is nilpotent, 
then $\dim_{\C} \BB(G)= \dim_{\C} \B(G)<\infty$.
\end{enumerate}
\end{corollary}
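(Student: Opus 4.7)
The plan is to derive parts (1)--(3) directly from \eqref{eq:bbcompl}, \eqref{prop:grb}, and standard properties of $I$-adically filtered modules; no new representation theory is required.

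For part (1), the $1$-formality hypothesis enters only through \eqref{eq:bbcompl}, which supplies the graded vector-space identification $\gr B(G) \cong \B(G)$. Reading off dimensions degree by degree and combining with the equality $\dim_\C \gr_q B(G) = \dim_\C \gr_q \BB(G)$ from \eqref{prop:grb}, together with the canonical surjection $\gr B(G) \surj \gr \BB(G)$ induced by $B(G) \surj \BB(G)$, one gets the chain of graded vector-space equalities $\gr B(G) = \gr \BB(G) = \B(G)$.

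For part (2), assume $B(G)$ is nilpotent, so that $I^{q+1} B(G) = 0$ for some $q \ge 0$, where $I \subset R = \C[G_{\ab}]$ is the augmentation ideal. Additivity of $\C$-dimension applied to the short exact sequences
\[
0 \to I^{k+1} B(G) \to I^k B(G) \to \gr_k B(G) \to 0
\]
telescopes to $\dim_\C B(G) = \sum_{k \ge 0} \dim_\C \gr_k B(G)$, a sum with at most $q+1$ nonzero terms. By part (1), each summand equals $\dim_\C \Wey_k(V, K)$, and by Remark \ref{rem:weyq} this dimension is finite, since $\Wey(V, K)$ is generated in degree $0$ over $S = \Sym(V)$ and thus $\Wey_k(V, K)$ is a quotient of the finite-dimensional space $S_k \otimes_\C \Wey_0(V, K)$. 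Moreover, the vanishing of $\gr_k B(G)$ for $k > q$ transports through (1) to the vanishing $\Wey_k(V, K) = 0$ for $k > q$, showing that $\B(G)$ itself is finite-dimensional. Combining, $\dim_\C B(G) = \dim_\C \B(G) < \infty$. Part (3) is proved verbatim, with $\BB(G)$ in place of $B(G)$, the augmentation ideal of $\widetilde{R} = \C[G_{\abf}]$ in place of $I$, and the identification $\gr \BB(G) = \B(G)$ from (1) used in place of $\gr B(G) = \B(G)$. The only step that demands any thought is the passage from $I$-adic nilpotence to finite-dimensionality, and this is immediate from the degree-zero generation built into the definition of $\Wey(V, K)$.
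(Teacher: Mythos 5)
Your argument is correct and is exactly the proof the paper intends (the paper declares the corollary ``immediate'' after recording \eqref{eq:bbcompl}, \eqref{prop:grb}, and the observation that a nilpotent module equals its associated graded as a vector space). Your telescoping of the finite $I$-adic filtration and the appeal to Remark \ref{rem:weyq} for finite-dimensionality of the graded pieces of $\Wey(V,K)$ just make explicit the steps the authors leave to the reader.
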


\subsection{Resonance varieties of groups}
\label{subsec:res gp}
As before, let $G$ be a finitely generated group, with 
$V^*=H^1(G,\C)$ and $K^{\perp} =\ker(\cup_G)\subset V^*\wedge V^*$. 
The resonance variety of $G$ is then defined as 
\begin{equation}
\label{eq:res g}
\RR(G)=\RR(V,K).
\end{equation}
This definition coincides (at least away from the origin)  
with the usual definition of the first resonance variety, $\RR^1_1(G,\C)$.

\begin{prop}
Let $V$ be an $n$-dimensional $\C$-vector space.  
\begin{enumerate}
\item \label{q1}
For any linear 
subspace $K\subseteq V\wedge V$ defined over $\Q$, there 
is a finitely presented, commutator-relators group $G$ with 
$V^*=H^1(G,\C)$ and $K^{\perp} =\ker(\cup_G)$. 
\item \label{q2}
If $m\ge 2n-3$, the Zariski open set $U_{n,m}=\{ K \in \Grass_m (V\wedge V)
\mid \RR(V,K)=\{0\}  \}$ contains a rational $m$-plane.
\end{enumerate}
\end{prop}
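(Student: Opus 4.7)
The plan is to treat the two parts independently. For \eqref{q1} I would realize $K$ as the image of $\partial_G$ for a commutator-relators group $G$ built from integral generators of $K$; for \eqref{q2} I would combine non-emptiness of $U_{n,m}$ (Proposition \ref{prop:genvanish}) with the Zariski density of rational points in the Grassmannian.

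For part \eqref{q1}, choose an integral lattice in $V$ with $\Z$-basis $\{e_1,\dots,e_n\}$. Since $K$ is defined over $\Q$, after clearing denominators we may pick a $\C$-basis $k_1,\dots,k_m$ of $K$ with $k_i=\sum_{j<l} a_{jl}^i(e_j\wedge e_l)$ for integers $a_{jl}^i$. In the free group $F_n=\langle x_1,\dots,x_n\rangle$, form the commutators $r_i=\prod_{j<l}[x_j,x_l]^{a_{jl}^i}\in F_n'$ and define
\[
G=\langle x_1,\dots,x_n\mid r_1,\dots,r_m\rangle.
\]
Because each $r_i$ lies in $F_n'$, the abelianization map gives $H_1(G,\Z)\cong\Z^n$, and sending $e_i$ to the class of $x_i$ identifies $V^*\cong H^1(G,\C)$. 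The remaining assertion $\ker(\cup_G)=K^\perp$ then follows from the classical identification of $\partial_G$ with the ``abelianized commutator'' pairing $F_n'/F_n''\to\bigwedge^2\Z^n$, $[a,b]\mapsto \bar a\wedge \bar b$ (see e.g.~\cite{Ma, PS-imrn}): under this pairing $r_i\mapsto k_i$, so $\im(\partial_G)=\spn_{\C}\{k_1,\dots,k_m\}=K$.

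For part \eqref{q2}, the Grassmannian $\Grass_m(V\wedge V)$ is a smooth, irreducible projective variety defined over $\Q$, and it contains a Schubert big cell that is $\Q$-isomorphic to an affine space. Since $U_{n,m}$ is a Zariski open subset defined over $\Q$ (Proposition \ref{prop:unm}) and non-empty under the hypothesis $m\ge 2n-3$ (Proposition \ref{prop:genvanish}), its trace on the big cell is a non-empty Zariski open subset of affine space over $\Q$, which therefore contains a $\Q$-rational point. This point provides the desired rational $m$-plane in $U_{n,m}$.

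The main obstacle is in \eqref{q1}: one must carefully identify the image of $\partial_G$ with the span of the abelianized relators, using Hopf's formula $H_2(G,\Z)=R/[F,R]$ together with the Magnus-type map on the $2$-step nilpotent quotient $F_n/F_n''$. All the ingredients are classical, going back to Magnus and Massey \cite{Ma}, but threading the dual of $\cup_G$ through these identifications requires some bookkeeping to rule out extraneous contributions to $\im(\partial_G)$. In contrast, part \eqref{q2} is essentially formal once non-emptiness of $U_{n,m}$ is established.
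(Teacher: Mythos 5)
Your proposal is correct and follows essentially the same route as the paper: for part \eqref{q1} you build the identical commutator-relators presentation $G=\langle x_1,\dots,x_n\mid r_1,\dots,r_m\rangle$ from an integral basis of $K$ (the paper verifies $\im(\partial_G)=K$ by ``a standard Fox calculus computation'' where you invoke Hopf's formula and the Magnus--Massey identification --- two equivalent standard ways of seeing the same classical fact), and for part \eqref{q2} you use the same Zariski-density-of-rational-points argument combined with Propositions \ref{prop:unm} and \ref{prop:genvanish}. No gaps.
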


\begin{proof}
To prove \eqref{q1}, 
fix a basis $e_1,\dots , e_n$ for $V$, and pick a basis 
$v_1,\dots, v_m$ for $K$, with entries 
$v_k=\sum_{1\le i<j\le n} c^k_{i,j} e_i \wedge e_j$  
having integral coefficients. Consider the group  with presentation 
$G=\langle x_1,\dots , x_n \mid r_1,\dots, r_m\rangle$, 
where $r_k=\prod_{1\le i<j\le n} (x_i,x_j)^{c^{k}_{i,j}}$.  
A standard Fox calculus computation shows that $K=\im (\partial_G)$.

To prove \eqref{q2}, note that the rational points in $\C^{m\binom{n}{2}}$ 
form a Zariski-dense subset.  
Hence, the conclusion follows from 
Proposition \ref{prop:genvanish}.
\end{proof}

As an immediate corollary, we see that the bound from 
Proposition \ref{prop:genvanish} is sharp, even for $m$-planes  
coming from finitely presented groups. 

\begin{corollary}
\label{cor:zerores gp}
For each $n\ge 2$, there is a finitely presented group $G$ such that  
$b_1(G)=n$, $\codim(\ker(\cup_G))=2n-3$, and $\RR(G)=\{0\}$. 
\end{corollary}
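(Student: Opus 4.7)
The plan is to combine the two parts of the preceding proposition in a direct way. Fix $n \ge 2$ and set $V=\C^n$. First I would apply part \eqref{q2} of the proposition with $m=2n-3$, the threshold value identified in Proposition \ref{prop:genvanish}. This produces a rational $(2n-3)$-plane $K \subset V\wedge V$ with $\RR(V,K)=\{0\}$.

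Next I would invoke part \eqref{q1} of the same proposition to realize this rational subspace: there is a commutator-relators presentation $G=\langle x_1,\dots,x_n \mid r_1,\dots,r_{2n-3}\rangle$ such that $V^*=H^1(G,\C)$ and $\ker(\cup_G)=K^{\perp}$. Since $G$ has only commutator relators, its abelianization is $\Z^n$, so $b_1(G)=n$. By construction $\dim K = 2n-3$, and therefore $\codim(\ker(\cup_G))=\codim K^{\perp}=\dim K=2n-3$.

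Finally, $\RR(G)=\RR(V,K)$ by definition \eqref{eq:res g}, and we chose $K\in U_{n,2n-3}$, so $\RR(G)=\{0\}$. This gives all three claimed properties simultaneously.

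There is essentially no obstacle here beyond checking that the three conditions line up with the output of the proposition; both ingredients (existence of a rational point in $U_{n,2n-3}$ and realization of any rational subspace by a commutator-relators group via Fox calculus) were established above. The only minor point to verify is that for the commutator-relators group one indeed has $b_1(G)=n$, which is immediate from abelianizing the presentation.
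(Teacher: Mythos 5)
Your proof is correct and is exactly the argument the paper intends (the corollary is stated there as "immediate" from the preceding proposition): take a rational $(2n-3)$-plane in $U_{n,2n-3}$ via part \eqref{q2}, realize it by a commutator-relators group via part \eqref{q1}, and read off the three properties. No gaps.
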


\begin{example}
\label{ex:n=4}
For $n=4$, consider the group 
\[
G=\langle x_1,\dots ,x_4 \mid (x_1,x_2), \, 
(x_2,x_3), \,  (x_3,x_4), \,  (x_1,x_4), \,  
(x_1,x_3)\cdot (x_2,x_4)\rangle. 
\]
In Pl\"ucker coordinates, the Grassmannian $\Grass_2(\C^4)\subset \PP(\C^6)$ 
is given by the equation $p_{12}p_{34}-p_{13}p_{24}+p_{23}p_{14}=0$, while 
the plane $\PP(K^{\perp})=\PP(\ker(\cup_G))\in \PP(\C^6)$ is given by  
$p_{12}=p_{23}=p_{34}=p_{14}=p_{13}+p_{24}=0$.  
Clearly, $\PP(K^{\perp})\cap \Grass_2(\C^4)=\emptyset$; 
thus, by Lemma \ref{lem:r1vanish}, $\RR(G)=\{0\}$.
\end{example}

Finally, recall that the {\em deficiency}\/ of a finitely presented group 
$G$, written $\df(G)$, is the supremum of the difference between 
the number of generators and the number of relators, taken 
over all finite presentations of $G$. 

\begin{corollary}
\label{cor:def}
Let $G$ be a finitely presented group.  Suppose $\RR(G)=\{0\}$. 
Then $\df(G)\le 3-b_1(G)$.
\end{corollary}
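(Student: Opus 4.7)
The plan is to combine the classical deficiency inequality for finitely presented groups with the lower bound on $\dim K$ forced by the vanishing of $\RR(G)$ in Proposition \ref{prop:genvanish}.

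First, recall (e.g.\ by looking at the presentation $2$-complex and computing with cellular chains over $\C$) that any finitely presented group $G$ satisfies the Morse-type inequality
\begin{equation*}
\df(G) \le b_1(G) - b_2(G),
\end{equation*}
where $b_i(G)=\dim_{\C} H_i(G,\C)$. I would quote this as a standard fact, since it holds in full generality without any resonance hypothesis.

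Next, set $V=H_1(G,\C)$, $n=b_1(G)=\dim V$, and $K=\im(\partial_G)\subseteq V\wedge V$, so that $K^{\perp}=\ker(\cup_G)$ and $\RR(G)=\RR(V,K)$. Because $\partial_G\colon H_2(G,\C)\to V\wedge V$ has image $K$, we have the elementary estimate
\begin{equation*}
\dim K \;\le\; \dim H_2(G,\C) \;=\; b_2(G).
\end{equation*}
By hypothesis $\RR(V,K)=\{0\}$, so $K$ lies in the set $U_{n,m}$ of \eqref{eq:unm} for $m=\dim K$. Proposition \ref{prop:genvanish} then forces $m\ge 2n-3$, i.e.\ $\dim K \ge 2b_1(G)-3$.

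Assembling the three bounds gives
\begin{equation*}
\df(G) \;\le\; b_1(G)-b_2(G) \;\le\; b_1(G)-\dim K \;\le\; b_1(G)-(2b_1(G)-3) \;=\; 3-b_1(G),
\end{equation*}
which is the claim. The only conceptual step is the Grassmannian/Plücker dimension count already done in Proposition \ref{prop:genvanish}; the remaining ingredients are formal. I anticipate no serious obstacle, other than citing cleanly the standard inequality $\df(G)\le b_1(G)-b_2(G)$.
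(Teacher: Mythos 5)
Your proposal is correct and follows essentially the same route as the paper: the paper likewise combines $\dim K\le b_2(G)\le q-m+b_1(G)$ (the presentation-complex inequality, which is exactly your $\df(G)\le b_1(G)-b_2(G)$ unwound) with the bound $\dim K\ge 2b_1(G)-3$ from Proposition \ref{prop:genvanish}. The only cosmetic difference is that you quote the Morse-type inequality as a black box, whereas the paper derives it directly from a finite presentation.
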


\begin{proof}
Let $G=\langle x_1,\dots, x_m\mid r_1, \dots, r_q\rangle$ 
be a finite presentation.  Set $n=b_1(G)$  and $k=\dim(\im(\partial_G))$. 
Then $k\le b_2(G) \le q-m+n$. 
From the hypothesis and Proposition \ref{prop:genvanish}, 
we have that $2n-3\le k$.  Thus, $2n-3 \le q-m+n$, and so $m-q \le 3-n$.  
The conclusion follows.
\end{proof}

\section{Torelli groups and vanishing resonance}
\label{sect:torelli res} 

\subsection{Torelli groups of free groups}
\label{subsec:torelli free} 

Let $F_n$ be the free group on $n$ generators. Identify the group 
$H=(F_n)_{\ab}$ with $\Z^n$  and the group $\Aut(\Z^n)$ 
with $\GL_n(\Z)$.   As is well-known, the Torelli group, 
$\OA_n=\T_{F_n}$, is finitely generated.  Furthermore, the 
canonical homomorphism $\Out(F_n)\to \GL_n (\Z)$ is surjective; 
thus, there is a natural $\GL_n (\Z)$-action on the homology 
groups of $\OA_n$. 

Work of Andreadakis, Cohen--Pakianathan, Farb and Kawazumi 
(see \cite{Pe}) shows that the action by restriction 
of $\SL_n (\Z)$ on $V=H_1(\OA_n,\C)$ extends to a rational, 
irreducible representation of the simply-connected, 
complex, semisimple, linear algebraic group $\SL_n(\C)$, 
and thus, of the Lie algebra $\sl_n(\C)$, which can be 
explicitly identified.  

Following Fulton and Harris \cite{FH}, denote by 
$t_1,\dots, t_n$ the dual coordinates of
the diagonal matrices from $\gl_n (\C)$, and set  
$\lambda_i =t_1+ \cdots+t_i$, for $1\le i\le n$.
Let $\h_n$ be the (diagonal) Cartan subalgebra of $\sl_n (\C)$; 
then $\h_n^* =\C\text{-span}\, \{ t_1,\dots, t_n \}/ \C\cdot \lambda_n$. 
The standard set of positive roots is $\Phi^{+}=\{ t_i-t_j \mid 1\le i<j \le n \}$, 
while the set of simple roots is $\Delta=\{ t_i - t_{i+1} \mid 
1\le i \le  n-1\}$.
The finite-dimensional irreducible representations of $\sl_n (\C)$ 
are parametrized by tuples $(a_1,\dots, a_{n-1})$ of non-negative integers;   
to such a tuple, there corresponds an irreducible representation 
$V(\lambda)$, with highest weight $\lambda =\sum_{i=1}^{n-1} a_i \lambda_i$. 

The following 
theorem recovers a result from \cite{PS-johnson}.

\begin{theorem}
\label{thm:rroan}
For each $n\ge 4$, the resonance variety 
$\RR(\OA_n)$ vanishes. 
\end{theorem}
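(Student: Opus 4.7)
The plan is to apply Corollary \ref{cor:mult free} to the $\sl_n(\C)$-module $V=H_1(\OA_n,\C)$ and the submodule $K\subset V\wedge V$ from the statement. By Pettet's identification, the dual $V^{*}=V(\lambda_1+\lambda_{n-2})$ is irreducible and $K^{\perp}=V(\lambda_1+\lambda_{n-2}+\lambda_{n-1})$ is also irreducible; hence $\VV(K^{\perp})$ consists of the single dominant weight $\lambda_1+\lambda_{n-2}+\lambda_{n-1}$. It therefore suffices to check condition \eqref{mf3} of Corollary \ref{cor:mult free}: for every simple root $\beta\in\Delta$ satisfying $(\lambda^{*},\beta)\ne 0$, one must verify that $2\lambda^{*}-\beta\ne \lambda_1+\lambda_{n-2}+\lambda_{n-1}$.

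First I would pin down which simple roots are relevant. Writing the simple roots of $\sl_n(\C)$ as $\beta_i=t_i-t_{i+1}$, $1\le i\le n-1$, and using the standard duality $\langle \lambda_j,\beta_i\rangle=\delta_{ij}$, we get
\[
\langle \lambda^{*},\beta_i\rangle=\delta_{i,1}+\delta_{i,n-2}.
\]
Because $n\ge 4$, the indices $1$ and $n-2$ are distinct, and since the pairing $\langle\,,\rangle$ is a non-zero rescaling of $(\,,)$ on each root, the simple roots with $(\lambda^{*},\beta)\ne 0$ are exactly $\beta_1$ and $\beta_{n-2}$.

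Next I would express $2\lambda^{*}-\beta_1$ and $2\lambda^{*}-\beta_{n-2}$ in the fundamental-weight basis via the inversion $\beta_i=-\lambda_{i-1}+2\lambda_i-\lambda_{i+1}$ (with $\lambda_0=\lambda_n=0$). This yields
\[
2\lambda^{*}-\beta_1=\lambda_2+2\lambda_{n-2},\qquad
2\lambda^{*}-\beta_{n-2}=2\lambda_1+\lambda_{n-3}+\lambda_{n-1}.
\]
To finish, I would compare each of these with $\lambda_1+\lambda_{n-2}+\lambda_{n-1}$ coefficient-wise: the first expression has no $\lambda_{n-1}$ contribution while the target does, and the second has coefficient $2$ on $\lambda_1$ while the target has coefficient $1$. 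These comparisons are clean for $n\ge 5$; for $n=4$ (where $n-3=1$ and $n-2=2$ collapse some indices) the two expressions specialize to $3\lambda_2$ and $3\lambda_1+\lambda_3$, which still differ from $\lambda_1+\lambda_2+\lambda_3$. Hence neither weight lies in $\VV(K^{\perp})$, and Corollary \ref{cor:mult free} delivers $\RR(\OA_n)=\{0\}$.

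The argument as outlined is mostly a bookkeeping exercise, so the only real obstacle is checking the low-dimensional case $n=4$ carefully, where the indices $1,\,n-2,\,n-3$ begin to coincide and one must verify that the would-be coincidences $2\lambda^{*}-\beta_i=\lambda_1+\lambda_{n-2}+\lambda_{n-1}$ still fail. All the conceptual work has already been done by Pettet's representation-theoretic identification and by the vanishing criterion in Corollary \ref{cor:mult free}.
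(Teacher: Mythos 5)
Your proposal is correct and follows essentially the same route as the paper: Pettet's identification of $V^*$ and $K^\perp$ followed by the roots-and-weights vanishing criterion. The only organizational difference is that the paper applies Theorem \ref{thm:root vanish} directly by computing $2\lambda^*-\mu=t_1-t_{n-1}$ once and noting it is not a simple root for $n\ge 4$, whereas you run the equivalent check of Corollary \ref{cor:mult free} by enumerating the two relevant simple roots $\beta_1,\beta_{n-2}$ and verifying $2\lambda^*-\beta\ne\mu$ in each case (including the $n=4$ specialization), which is the same comparison read in the opposite direction.
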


\begin{proof}
Let $V^*= H^1 (\OA_n, \C)$, and let $K^{\perp}\subset V^*\wedge V^*$ 
be the kernel of the cup-product map in degree $1$.   
Set  $\lambda=\lambda_2 +\lambda_{n-1}$, so that 
$\lambda^*= \lambda_1 +\lambda_{n-2}$, and 
also set $\mu= \lambda_1 +\lambda_{n-2} +\lambda_{n-1}$.  As 
shown by Pettet in \cite{Pe},  $V^*=V(\lambda^*)$ and 
$K^{\perp}=V(\mu)$, as $\sl_n (\C)$-modules.  

It is immediate to see that 
$2\lambda^*-\mu=t_1-t_{n-1}$ is not a simple root.  
It follows from Theorem \ref{thm:root vanish}
that $\RR(V,K)=\{0\}$.
\end{proof}

\begin{remark}
\label{rem:n3}
When $n=3$, the above proof breaks down, since 
$t_1-t_{2}$ is a simple root.  In fact, 
$K^{\perp} = V^* \wedge V^*$ in this case, 
and so $\RR(V,K)=V^*$. 
\end{remark}

\begin{corollary}
\label{cor:boan}
For each $n\ge 4$, let  $V=V( \lambda_2 +\lambda_{n-1})$ and let
$K^{\perp} = V(\lambda_1+\lambda_{n-2}+\lambda_{n-1}) \subset V^*\wedge V^*$
be the Pettet summand, as above.  Then $\dim \Wey(V,K) <\infty$ and 
$\dim \gr_q B(\OA_n) \le \dim \Wey_q(V,K)$, for 
all $q\ge 0$.
\end{corollary}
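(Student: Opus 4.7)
The plan is to read this corollary as an immediate packaging of three results already in the paper: Theorem \ref{thm:rroan} (vanishing resonance for $\OA_n$), Lemma \ref{lem:supp}\eqref{rr2} (vanishing resonance $\Leftrightarrow$ finite-dimensional Koszul module), and the inequality \eqref{prop:grb} comparing $\gr B(G)$ with the infinitesimal Alexander invariant $\B(G)$.

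First, I would identify $\Wey(V,K)$ with the infinitesimal Alexander invariant $\B(\OA_n)$. By definition \eqref{eq:inf alex}, we have $\B(G) = \Wey(H_1(G,\C), \im(\partial_G))$, where $\partial_G$ is the transpose of the cup product. Pettet's identification gives exactly $V = H_1(\OA_n,\C)$ as the $\sl_n(\C)$-module $V(\lambda_2+\lambda_{n-1})$ and $K^{\perp} = \ker(\cup_{\OA_n}) = V(\lambda_1+\lambda_{n-2}+\lambda_{n-1})$, so the subspace $K \subset V \wedge V$ in the statement coincides with $\im(\partial_{\OA_n})$. Consequently $\Wey(V,K) = \B(\OA_n)$ as graded $S$-modules.

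Next, I would invoke Theorem \ref{thm:rroan}, which asserts $\RR(\OA_n) = \RR(V,K) = \{0\}$ for $n \geq 4$. By Lemma \ref{lem:supp}\eqref{rr2}, vanishing of the resonance variety is equivalent to $\dim_\C \Wey(V,K) < \infty$, giving the first assertion.

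For the second assertion, I would simply cite the inequality recorded in \eqref{prop:grb}, applied to the group $G = \OA_n$: it says $\dim_\C \gr_q B(G) \leq \dim_\C \B_q(G)$ for every $q \geq 0$. Combined with the identification $\B_q(\OA_n) = \Wey_q(V,K)$ from the first step, this yields $\dim \gr_q B(\OA_n) \leq \dim \Wey_q(V,K)$ for all $q \geq 0$. There is no real obstacle here — the corollary is a bookkeeping consequence of the preceding theorem plus the general machinery of Section \ref{sect:alexinv}; the only point deserving a brief remark is the identification of the Pettet submodule $K^{\perp}$ with $\ker(\cup_{\OA_n})$, which is taken directly from \cite{Pe}.
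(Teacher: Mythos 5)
Your proposal is correct and matches the paper's own (one-line) proof, which cites exactly the same ingredients: the vanishing $\RR(V,K)=\{0\}$ from Theorem \ref{thm:rroan}, Lemma \ref{lem:supp} for finite-dimensionality, and inequality \eqref{prop:grb} together with the identification $\B(\OA_n)=\Wey(V,K)$ for the comparison of graded pieces. Nothing to add.
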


\begin{proof}
Follows from Lemma \ref{lem:supp} 
and inequality \eqref{prop:grb}.
\end{proof}

\subsection{Torelli groups of surfaces}
\label{subsec:torelli surf} 

Let $\Sigma_g$ be a Riemann surface of genus $g$, 
and let $\T_g=\T_{\pi_1(\Sigma_g)}$ be the associated 
Torelli group.   For $g \le 1$, the group $\T_g$ is trivial, 
while for $g=2$, it is not finitely generated.  
So we will assume from now on that $g\ge 3$, in 
which case it is known that $\T_g$ is finitely generated. 

The canonical homomorphism $\Out^+(\pi_1(\Sigma_g))\to \Sp_{2g} (\Z)$ 
is surjective; thus, there is a natural $\Sp_{2g} (\Z)$-action 
on the homology groups of $\T_g$. Work of D.~Johnson \cite{J3}
shows that the action 
of $\Sp_{2g} (\Z)$ on $V=H_1(\T_g,\C)$ extends to a rational, 
irreducible representation of the simply-connected, 
complex, semisimple, linear algebraic group $\Sp_{2g}(\C)$, 
and thus, of the Lie algebra $\sp_{2g}(\C)$.  

Let $\h \subset \sp_{2g}(\C)$  be the 
Cartan subalgebra of diagonal matrices, 
and let $t_1,\dots, t_g$ be the standard   
basis of $\h^*$, orthogonal with respect to the dual Killing form. 
As in \cite{FH}, we take the set of simple roots 
to be $\Delta=\{t_1-t_2,t_2-t_3,\dots, t_{g-1}-t_g, 2t_g\}$.  
The fundamental dominant weights are $\lambda_i=t_1+\cdots +t_i$, 
with $1\le i\le g$. 
The finite-dimensional irreducible representations of $\sp_{2g} (\C)$ 
are parametrized by tuples $(a_1,\dots, a_{g})$  of non-negative integers;    
to such a tuple, there corresponds an irreducible representation 
$V(\lambda)$, with highest weight $\lambda =\sum_{i=1}^{g} a_i \lambda_i$. 

The following theorem recovers a result from \cite{DP}.

\begin{theorem}
\label{thm:rrtor}
For each $g\ge 4$, the resonance variety 
$\RR(\T_g)$ vanishes and $\dim \Wey(V,K) <\infty$. 
\end{theorem}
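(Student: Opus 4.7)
The proof of Theorem \ref{thm:rrtor} should closely parallel that of Theorem \ref{thm:rroan}, applying Theorem \ref{thm:root vanish} to the symplectic representations identified by Hain. The plan is to verify the roots-and-weights hypothesis directly, then invoke Lemma \ref{lem:supp} for the finite-dimensionality statement.

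First, I would record the relevant highest weights. Using the identification $V^* = V(\lambda_3)$, set $\lambda^* = \lambda_3 = t_1 + t_2 + t_3$, so that
\[
2\lambda^* = 2t_1 + 2t_2 + 2t_3.
\]
The decomposition $K^{\perp} = V(2\lambda_2) \oplus V(0)$ gives $\VV(K^{\perp}) = \{2\lambda_2,\, 0\}$.

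Next, I would compute $2\lambda^* - \mu$ for each $\mu \in \VV(K^{\perp})$. For $\mu = 2\lambda_2 = 2t_1 + 2t_2$, one obtains $2\lambda^* - \mu = 2t_3$; for $\mu = 0$, one obtains $2\lambda^* - \mu = 2t_1 + 2t_2 + 2t_3$. Recalling that the set of simple roots is $\Delta = \{t_1-t_2, t_2-t_3, \ldots, t_{g-1}-t_g,\, 2t_g\}$, the weight $2t_1 + 2t_2 + 2t_3$ is manifestly not simple, and the only simple root of the form $2t_k$ is $2t_g$. Since $g \ge 4$ forces $2t_3 \ne 2t_g$, neither element lies in $\Delta$. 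Hence the hypothesis of Theorem \ref{thm:root vanish} is satisfied, yielding $\RR(V,K) = \{0\}$. Finite-dimensionality $\dim_{\C}\Wey(V,K) < \infty$ then follows immediately from Lemma \ref{lem:supp}(2).

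There is essentially no obstacle beyond setting up this weight bookkeeping; the only delicate point is the restriction $g \ge 4$. If one tried the same argument for $g = 3$, the weight $2t_3 = 2t_g$ would coincide with a simple root, so the roots-and-weights criterion would fail (mirroring the breakdown at $n = 3$ for $\OA_n$ noted in Remark \ref{rem:n3}). This explains precisely why the hypothesis on $g$ in the theorem is sharp for this method.
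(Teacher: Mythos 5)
Your proof is correct and follows essentially the same route as the paper: the paper verifies the roots-and-weights criterion via Corollary \ref{cor:mult free} (checking that the unique simple root $\beta=t_3-t_4$ with $(\lambda_3,\beta)\ne 0$ gives $2\lambda_3-\beta=\lambda_2+\lambda_4\notin\VV(K^{\perp})$), while you check the logically equivalent condition of Theorem \ref{thm:root vanish} by iterating over $\mu\in\{2\lambda_2,0\}$; both are one-line weight computations, and your observation about the $g=3$ breakdown matches Remark \ref{rem:g3}.
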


\begin{proof}
Let $V^*= H^1 (\T_g, \C)$, and let $K^{\perp}\subset V^*\wedge V^*$ 
be the kernel of the cup-product map in degree $1$.   
Work of  Hain \cite{Ha} identifies these 
$\sp_{2g}(\C)$-representation spaces, as follows: 
$V^*=V(\lambda_3)$ and $K^{\perp}=V(2\lambda_2)\oplus V(0)$.  
Moreover, the decomposition of $V^*\wedge V^*$ into 
irreducibles is multiplicity-free.  

Let us apply Corollary \ref{cor:mult free} to this  
situation. 
The only simple root $\beta\in \Delta$ such that 
$(\lambda_3,\beta)\ne 0$ is $\beta=t_3-t_4$. 
Clearly, $2\lambda_3 - \beta=\lambda_2+\lambda_4$ 
does not belong to $\VV(K^{\perp})$.  Hence, 
$\RR(V,K)=\{0\}$. By Lemma \ref{lem:supp}, $\dim \Wey(V,K) <\infty$.
\end{proof}

\begin{remark}
\label{rem:g3}
When $g=3$, the above proof breaks down. 
Indeed, take $\beta=2t_3\in \Delta$; 
then $(\lambda^*,\beta)\ne 0$ and 
$2\lambda^*-\beta=2\lambda_2$ belongs to $\VV(K^{\perp})$. 
Hence, $\RR(V,K)\ne \{0\}$.  In fact, 
$K^{\perp}=V^* \wedge V^*$ in this case, and thus $\RR(V,K)=V^*$.
\end{remark}

\begin{corollary}
\label{cor:btg}
For each $g\ge 6$, let $V=V( \lambda_3)$ and 
$K^{\perp} = V(2\lambda_2)\oplus V(0)$. Then 
$\dim \widetilde{B} (\T_g) = \dim \Wey (V,K)$.
\end{corollary}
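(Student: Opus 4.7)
The plan is to combine the vanishing resonance result of Theorem \ref{thm:rrtor} with the formality machinery of Section \ref{subsec:formal}. The key input that distinguishes this corollary from the unconditional finite-dimensionality of $\Wey(V,K)$ already asserted in Theorem \ref{thm:rrtor} is $1$-formality of the Torelli group.

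First I would invoke Hain's theorem, which asserts that $\T_g$ is $1$-formal when $g \ge 6$; this is precisely why the genus hypothesis is strengthened from $g \ge 4$ (the threshold at which resonance already vanishes) to $g \ge 6$ here. Combined with Corollary \ref{cor:formal}\eqref{f1}, $1$-formality gives an identification $\gr \BB(\T_g) \cong \B(\T_g)$ of graded vector spaces. By Theorem \ref{thm:rrtor}, $\B(\T_g) = \Wey(V,K)$ is finite-dimensional over $\C$, so $\gr \BB(\T_g)$ is finite-dimensional as well.

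The crux of the argument is to upgrade finite-dimensionality of $\gr \BB(\T_g)$ to nilpotence of the $\widetilde{R}$-module $\BB(\T_g)$, where $\widetilde{R} = \C[(\T_g)_{\abf}]$ and the filtration is $I$-adic in the augmentation ideal $I \subset \widetilde{R}$. Finite-dimensionality of $\gr \BB(\T_g)$ forces the filtration to stabilize, i.e., $I^q \BB(\T_g) = I^{q+1} \BB(\T_g)$ for $q$ sufficiently large; since $\BB(\T_g)$ is finitely generated over the Noetherian Laurent polynomial ring $\widetilde{R}$, a Nakayama-type argument then delivers nilpotence, provided one knows the $I$-adic filtration on $\BB(\T_g)$ is separated. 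I expect this separatedness — equivalently, the vanishing of $\bigcap_q I^q \BB(\T_g)$ — to be the main obstacle; the natural way to handle it is to use that $1$-formality of $\T_g$ implies the Malcev Lie algebra and its universal enveloping algebra behave like the completions of their quadratic models, so the $I$-adic completion of $\BB(\T_g)$ is faithfully flat over $\widetilde{R}$-localizations at $I$.

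Once nilpotence of $\BB(\T_g)$ has been established, Corollary \ref{cor:formal}\eqref{f3} applies directly and yields
\[
\dim_{\C} \BB(\T_g) \;=\; \dim_{\C} \B(\T_g) \;=\; \dim_{\C} \Wey(V,K),
\]
which is the desired conclusion.
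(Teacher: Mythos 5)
There is a genuine gap, and it sits exactly where you flag your own uncertainty: the passage from finite-dimensionality of $\gr \BB(\T_g)$ to nilpotence of the $\widetilde{R}$-module $\BB(\T_g)$. That implication is false in general. The $I$-adic associated graded only sees the germ of the module at the trivial character; a module supported away from the augmentation ideal is invisible to it. For instance, $M=\C[t^{\pm 1}]/(t-2)$ over $\widetilde{R}=\C[t^{\pm 1}]$ with $I=(t-1)$ satisfies $IM=M$, hence $\gr M=0$ is finite-dimensional, yet $I^qM=M\ne 0$ for all $q$, so $M$ is not nilpotent. Your proposed repair via $1$-formality cannot close this hole: formality (through \eqref{eq:bbcompl}) identifies $\gr B(G)$ with $\B(G)$, i.e., it again only controls the completed/graded object, and faithful flatness of the $I$-adic completion is a statement about the local ring at the trivial character, not about the components of $\supp \BB(\T_g)$ lying elsewhere in the character torus. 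Separatedness of the $I$-adic filtration on $\BB(\T_g)$ is precisely what needs to be proved, and nothing in your argument supplies it.

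The paper does not attempt this upgrade. It quotes nilpotence of $\BB(\T_g)$ for $g\ge 4$ as a theorem of \cite{DHP}, where it is established by genuinely different means (controlling the global support of the Alexander invariant using the $\Sp_{2g}(\Z)$-symmetry, not just the resonance variety at the origin). With that input, together with Hain's $1$-formality for $g\ge 6$ and the self-duality of $\sp_{2g}(\C)$-representations (needed to reconcile $V$ with $V^*$ in the statement), Corollary \ref{cor:formal}\eqref{f3} gives the conclusion immediately — which is the part of your argument that is correct. To complete your proof you would either have to cite the \cite{DHP} nilpotence result as the paper does, or reprove that $\supp \BB(\T_g)\subseteq\{1\}$, which is a substantial task not addressed by your sketch.
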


\begin{proof}
As  shown in \cite{Ha}, the group $\T_g$ is $1$-formal, 
as long as $g\ge 6$.  Furthermore, as shown in \cite{DHP}, 
the $\widetilde{R}$-module $\BB(\T_g)$ is nilpotent,  
provided $g\ge 4$.   The conclusion follows from 
Corollary \ref{cor:formal}\eqref{f3}, 
and the fact that all representations of $\sp_{2g}(\C)$ 
are self-dual.
\end{proof}

\begin{ack}
This work was started while the first author visited Northeastern 
University, in Spring, 2011; he thanks the Northeastern Mathematics 
Department for its support and hospitality.   Part of this work was 
carried out while the two authors visited the Max Planck Institute 
for Mathematics in Bonn in April--May 2012; we both wish to 
thank MPIM for its support and excellent research atmosphere. 
The work was completed while the second author visited the 
Institute of Mathematics of the Romanian Academy in June, 
2012; he thanks IMAR for its support and hospitality.

We wish to thank Don King for suggesting a simplified proof for  
Lemma \ref{lem:weights}.
\end{ack}

\newcommand{\arxiv}[1]
{\texttt{\href{http://arxiv.org/abs/#1}{arxiv:#1}}}
\newcommand{\arx}[1]
{\texttt{\href{http://arxiv.org/abs/#1}{arXiv:}}
\texttt{\href{http://arxiv.org/abs/#1}{#1}}}
\renewcommand{\MR}[1]
{\href{http://www.ams.org/mathscinet-getitem?mr=#1}{MR#1}}
\newcommand{\doi}[1]
{\texttt{\href{http://dx.doi.org/#1}{doi:#1}}}

\end{document}